\numberwithin{equation}{section}
\newcommand{\zzz}{\mathbb{Z}}
\newtheorem{theorem}{Theorem}[section]
\newtheorem{proposition}[theorem] {Proposition}
\newtheorem{lemma}[theorem]{Lemma}
\theoremstyle{definition}
\newtheorem{definition}[theorem]{Definition}
\theoremstyle{remark}
\theoremstyle{remark} 
\newtheorem*{rembis}{Remark}
\theoremstyle{remark}
\newcommand\Xs{\mathbb{X}}
\newcommand\Os{\mathbb{O}}
\newcommand\Pent{\mathrm{Pent}}
\newcommand\Hex{\mathrm{Hex}}
\newcommand\Rect{\mathrm{Rect}}
\def\Caa{\widetilde{C}}
\def\Os{\mathbb O}
\def\ss {{\mathfrak{S}}}
\def\fin\qedhere
\def\pr {{\text{pr}}}
\def\un{\mathbf 1 }
\def\wun{{\widetilde{ \mathbf{1} }}}
\def\x{\mathbf x}
\def\y{\mathbf y}
\def\z{\mathbf z}
\def\wx{\widetilde{\mathbf x}}
\def\wy{\widetilde{\mathbf y}}
\def\wz{\widetilde{\mathbf z}}
\def\wss{\widetilde{\mathfrak S}}
\def\S{\mathbf S}
\def\alphas{\mathbb\alpha}
\def\betas{\mathbb\beta}
\newcommand\EmptyRect{\Rect^o}
\def\EmptyRect{\Rect^\circ}
\def\wdm{\widetilde{\partial}^-}
\newcommand\wtau{\widetilde{\tau}}
\def\wr{\widetilde{r}}
\def\wp{\widetilde{p}}
\def\wcm{\widetilde{C}^-}
\begin{document}
\title[Sign refinement]{Sign refinement for combinatorial\\link Floer homology}
\author[\'Etienne Gallais]{\'Etienne Gallais}
\address {LMAM - Universit\'e de Bretagne Sud, BP 573, 56017 VANNES, FRANCE}
\email {etienne.gallais@univ-ubs.fr}

\begin {abstract} 
Link Floer homology is an invariant for links which has recently been described entirely in a combinatorial way.
Originally constructed with mod 2 coefficients, it was generalized to integer coefficients thanks to a sign refinement.
In this paper, thanks to the spin extension of the permutation group we give an alternative construction of the combinatorial link Floer chain complex associated to a grid diagram with integer coefficients.
We prove that the filtered homology of this complex is an invariant for the link and that it gives the previous sign refinement by means of a $2$-cohomological class corresponding to the spin extension of the permutation group.
\end {abstract}
\today
\maketitle

\section{Introduction}
Heegaard-Floer homology \cite{OS1} is an invariant for closed oriented 3-manifolds which was extended to give an invariant for null-homologous oriented links in such manifolds called link Floer homology \cite{OS2}, \cite{OS3} \cite{Rasmussen}.
It gives the Seifert genus $g(K)$ of a knot $K$ \cite{OS4}, detects fibered knots (\cite{ghiggini} in the case where $g(K)=1$ and \cite{ni} in general) and its graded Euler characteristic gives the Alexander polynomial (\cite{OS2}, \cite{Rasmussen}).
Recently, a combinatorial description of link Floer homology was given \cite{MOS} and its topological invariance was proved in a purely combinatorial way \cite{MOST}.
The purpose of this paper is to give an alternative description of combinatorial link Floer homology with $\zzz$ coefficients.
This point of view was recently used by Audoux \cite{Aud07} to describe combinatorial Heegaard--Floer homology for singular knots.

Let first recall the context of combinatorial link Floer homology: we follow conventions of \cite{MOST}.
A planar grid diagram $G$ lies in a square on the plane with $n\times n$ squares where $n$ is the complexity of $G$.
Each square is decorated with an $X$, an $O$ or nothing in such a way that each row and each column contains exactly one $X$ and one $O$.
We number the $X$'s and the $O$'s from 1 to $n$ and denote $\Xs$ the set $\{X_i \}_{i=1}^n $ and $\Os$ the set $\{O_i \}_{i=1}^n$.

Given a grid diagram $G$, we place it in standard position on the plane as follows: the bottom left corner is at the origin and each cell is a square of length one.
We construct a planar link projection by drawing horizontal segments from the $O$'s to the $X$'s in each row and vertical segments from the $X$'s to the $O$'s in each column.
At each intersection point, the vertical segment is over the horizontal one.
This gives an oriented link $\overrightarrow L$ in $S^3$ and we say that $\overrightarrow L$ has a grid presentation given by $G$.

\begin{figure}[!ht]
  \begin{center}
	\input{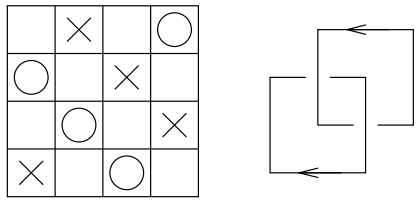} 
  \end{center}
   \caption{\footnotesize \textbf{Grid presentation of the Hopf link.} }
   \label{fig:diagramme-en-grille-hopf}
\end{figure}

We place the grid diagram on the oriented torus $\mathcal T$ by making the usual identification of the boundary of the square.
We endow $\mathcal T$ with the orientation induced by the planar orientation.
Let $\alphas$ be the collection of the horizontal circles and $\betas$ the collection of the vertical ones.
We associate with $G$ a chain complex $(C^-,\partial^-)$: it is the group ring of $\ss_n$ over $\zzz/2\zzz[U_{O_1},\ldots,U_{O_n}]$ where $\ss_n$ is the permutation group of $n$ elements.
A generator $\x\in\ss_n$ is given on $G$ by its graph: we place dots in points $(i,x(i))$ for $i=0,\ldots,n-1$ (thus the fundamental domain of $G$ is the square minus the right vertical segment and the top horizontal segment).

For $A,B$ two finite sets of points in the plane we define $\mathcal{I}(A,B)$ to be the number of pairs $(a_1,a_2)\in A$ and $(b_1,b_2)\in B$ such that $a_1 < b_1$ and $a_2 < b_2$.
Let $\mathcal{J}(A,B) = (\mathcal{I}(A,B) + \mathcal{I}(B,A))/2$.
We provide the set of generators with a Maslov degree $M$ given by
$$
M(\x) =\mathcal{J}(\x-\Os,\x-\Os)+1
$$
where we extend $\mathcal J$ by bilinearly over formal sums (or differences) of subsets.
Each variable $U_{O_i}$ has a Maslov degree equal to $-2$ and constants have Maslov degree equal to zero.
Let $M_S(\x)$ be the same as $M(\x)$ with the set $S$ playing the role of $\Os$.

We provide the set of generators with an Alexander filtration $A$ given by $A(\x) = (A_1(\x),\ldots,A_l(\x))$ with
$$
A_i(\x) = \mathcal{J}(\x -\dfrac{1}{2}(\Xs + \Os) , \Xs_i - \Os_i ) - \dfrac{n_i - 1}{2}
$$
where when we number the components of $\overrightarrow L$ from 1 to $l$, $\Os_i \subset \Os$ (respect. $\Xs_i \subset \Xs$) is the subset of $\Os$ (resp. $\Xs$) wich belongs to the $i$-th component of $\overrightarrow L$ and $n_i$ is the number of horizontal segments which belongs to the $i$-th component.
We let $A(U_{O_j}) = (0,\ldots,-1,0,\ldots,0)$ where $-1$ corresponds to the $i$-th coordonate if $O_j$ belongs to the $i$-th component of $\overrightarrow L$.

Given two generators $\x$ and $\y$ and an immersed rectangle $r$ in the torus whose edges are arcs in the horizontal and vertical circles, we say that $r$ connects $\x$ to $\y$ if $\y . \x^{-1}$ is a transposition, if all four corners of $r$ are intersection points in $\x \cup \y$, and if we traverse each horizontal boundary component of $r$ in the direction dictated by the orientation of $r$ induced by $\mathcal T$, then the arc is oriented from a point in $\x$ to the point in $\y$.
Let $\Rect(\x,\y)$ be the set of rectangles connecting $\x$ to $\y$: either it is the empty set or it consists of exactly two rectangles.
Here a rectangle $r\in \Rect(\x,\y)$ is said to be empty if there is no point of $\x$ in its interior.
Let $\Rect^{\circ}(\x,\y)$ be the set of empty rectangles connecting $\x$ to $\y$.

\begin{figure}[!ht]
  \begin{center}
	\input{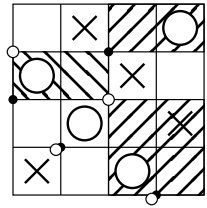} 
  \end{center}
   \caption{\footnotesize \textbf{Rectangles.} We mark with black dots the generator $\x$ and with white dots the generator $\y$.
There are two rectangles in $\Rect(\x,\y)$ but only the left one is in $\Rect^{\circ}(\x,\y)$.}
   \label{fig:rectangle}
\end{figure}

The differential $\partial^- : C^-(G) \rightarrow C^-(G)$ is given on the set of generators by
$$
\partial^- \x = \sum_{\y\in \ss_n} \sum_{r\in \Rect^{\circ}(\x,\y)} U_{O_1}^{O_1 (r)} \ldots U_{O_n}^{O_n (r)} . \y
$$
where $O_i(r)$ is the number of times $O_i$ appears in the interior of $r$.

\begin{theorem}[Manolescu-Ozsv\`ath-Sarkar \cite{MOS}]
  $(C^-(G),\partial^-)$ is a chain complex for $CF^-(S^3)$ with homological degree induced by $M$ and filtration level induced by $A$ which coincides with the link filtration of $CF^-(S^3)$.
\end{theorem}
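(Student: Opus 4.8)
The plan is to recognize the toroidal grid diagram $G$ as an admissible multiply-pointed Heegaard diagram $(\mathcal T,\alphas,\betas,\Os,\Xs)$ of genus $n-1$ presenting the pair $(S^3,\overrightarrow L)$, and then to check that the purely combinatorial complex $(C^-(G),\partial^-)$ reproduces, as a filtered complex, the link Floer complex of this Heegaard diagram. Concretely I would split the argument into three parts: (i) the elementary structural facts that $\partial^-$ is well defined and is a differential; (ii) compatibility of $\partial^-$ with the Maslov grading $M$ and the Alexander filtration $A$; and (iii) the identification of the resulting homology with $CF^-(S^3)$ carrying its link filtration.

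For (i), well-definedness and finiteness are immediate: for a fixed $\x$ only finitely many $\y$ have $\y.\x^{-1}$ a transposition, and $\Rect(\x,\y)$ then has at most two elements. To prove $(\partial^-)^2=0$ I would expand $(\partial^-)^2\x$ as a sum over juxtapositions $\psi=r_1\ast r_2$ of empty rectangles $r_1\in\Rect^\circ(\x,\y)$ and $r_2\in\Rect^\circ(\y,\z)$, and show that the underlying domains come in pairs carrying the same monomial $U_{O_1}^{O_1(\psi)}\cdots U_{O_n}^{O_n(\psi)}$. The usual case analysis applies: if $r_1$ and $r_2$ have disjoint interiors, or meet only along corners, then $\psi$ decomposes in a second way into a pair $(r_1',r_2')$ through a different intermediate generator; if they share an edge, $\psi$ is an L-shaped hexagon with exactly two such decompositions; and the remaining configuration, in which $r_1\cup r_2$ is a horizontal or vertical annulus, cannot occur since such an annulus always contains a point of the intermediate generator in its interior, contradicting emptiness of $r_1$ or $r_2$. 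Over $\zzz/2\zzz$ the paired terms cancel, so $(\partial^-)^2=0$.

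For (ii), if $r\in\Rect^\circ(\x,\y)$ then $\x$ and $\y$ coincide away from the two $\betas$-circles bounding $r$, so $\x-\y$ is supported near $r$; using the bilinearity of $\mathcal J$ one gets the local identities $M(\x)-M(\y)=1-2\sum_j O_j(r)$ and $A_i(\x)-A_i(\y)=\sum_{X_j\in\Xs_i}X_j(r)-\sum_{O_j\in\Os_i}O_j(r)$, where $X_j(r)$ denotes the number of times $X_j$ lies in the interior of $r$. Since multiplication by $U_{O_1}^{O_1(r)}\cdots U_{O_n}^{O_n(r)}$ lowers Maslov degree by $2\sum_j O_j(r)$ and the $i$-th Alexander coordinate by $\sum_{O_j\in\Os_i}O_j(r)$, the corresponding term of $\partial^-\x$ has Maslov degree $M(\x)-1$ and Alexander level $\le A(\x)$, as the statement requires.

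For (iii), the substantive part, the diagram $(\mathcal T,\alphas,\betas,\Os)$ is an admissible Heegaard diagram for $S^3$, and one must verify that the Maslov-index-one holomorphic disks counted by the Heegaard--Floer differential are precisely the empty rectangles counted by $\partial^-$, each contributing a single representative. The key input, following Manolescu--Ozsv\`ath--Sarkar \cite{MOS}, is that in a grid diagram the abundance of basepoints forces every positive index-one domain connecting $\x$ to $\y$ with $\y.\x^{-1}$ a transposition to be an embedded rectangle, and for such a domain the (unparametrized) moduli space of holomorphic representatives is a single point, by a Riemann-mapping argument or via Lipshitz's cylindrical reformulation. The powers of $U_{O_j}$ then record the multiplicities at the $\Os$-basepoints exactly as in the definition of $CF^-$, while the $\Xs$-basepoints, avoided by the differential, cut out the link filtration; identifying that filtration with the combinatorial $A$ — up to the normalization $(n_i-1)/2$ accounting for the stabilizations that relate the genus-$(n-1)$ grid diagram to a genus-one diagram for $\overrightarrow L$ — completes the proof. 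This last matching of the combinatorial rectangle count with the analytic disk count, together with the comparison of filtrations, is where I expect the principal difficulty to lie.
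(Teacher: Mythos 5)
The paper states this theorem as a result cited from~\cite{MOS} and does not reprove it, but the gap in your step~(i) is exposed by comparing with the paper's own proof of $\wdm \circ \wdm = 0$ for the sign-refined complex $\wcm(G)$, which is the exact parallel computation. You claim that the configuration in which $r_1 * r_2$ is a vertical or horizontal annulus ``cannot occur since such an annulus always contains a point of the intermediate generator in its interior, contradicting emptiness of $r_1$ or $r_2$.'' This is false for annuli of width (or height) one. When the annulus lies between two \emph{adjacent} vertical circles $\beta_i$ and $\beta_{i+1}$, the intermediate generator $\y = \x\tau_i$ has its two displaced points at the shared corners of $r_1$ and $r_2$, and every other point of $\x$ (equivalently of $\y$) lies in some other column, hence outside the width-one strip entirely; so both halves of the annulus are automatically empty rectangles. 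Such an annulus therefore \emph{does} contribute a term $U_{O_m}\x$ to $(\partial^-)^2\x$, where $O_m$ is the unique $O$ in that column. Your case analysis leaves these terms unaccounted for.

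The correct cancellation mechanism is the one the paper makes explicit as Case~5 in its proof that $\wdm\circ\wdm=0$: each vertical width-one annulus is paired not with another rectangle decomposition of the same domain, but with the \emph{horizontal} height-one annulus passing through the same marking $O_m$. Both contribute $U_{O_m}\x$, and these cancel mod~$2$ (and acquire opposite signs in the refined theory, which is precisely what condition~(V) versus~(H) in the paper's Definition~\ref{def:assignation-signe} encodes). You should replace the spurious exclusion argument by this vertical--horizontal pairing. The remainder of your outline --- the disjoint/shared-edge decompositions in~(i), the bilinear computations with $\mathcal J$ in~(ii), and the Riemann-mapping identification of Maslov-index-one holomorphic disks with empty rectangles in~(iii) --- follows the~\cite{MOS} route faithfully.
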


In \cite{MOST}, the authors define a sign assigment for empty rectangles $\S:\Rect^{\circ} \rightarrow \{\pm1\}$.
Then, by considering $C^-(G)$ the group ring of $\ss_n$ over $\zzz[U_{O_1},\ldots,U_{O_n}]$ and the differential $\partial^-: C^-(G) \rightarrow C^-(G)$ given by
$$
\partial^- \x = \sum_{\y\in \ss_n} \sum_{r\in \Rect^{\circ}(\x,\y)} \S(r).U_{O_1}^{O_1 (r)} \ldots U_{O_n}^{O_n (r)} . \y
$$
they obtain the following result:
\begin{theorem}[Manolescu-Ozsv\`ath-Szab\'o-D. Thurston \cite{MOST}]
  Let $\overrightarrow L$ be an oriented link with $l$ components.
We number the $\Os$ so that $O_1,\ldots,O_l$ correspond to the different components of $\overrightarrow L$.
Then the filtered quasi-isomorphism type of $(C^-(G),\partial^-)$ over $\zzz[U_{O_1},\ldots,U_{O_l}]$ is an invariant of the link.
\end{theorem}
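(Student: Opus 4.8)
The plan is to follow the scheme of \cite{MOST}, upgrading each mod-$2$ argument of \cite{MOS} to $\zzz$ by carrying along the sign assignment $\S$. There are two halves: showing that $(C^-(G),\partial^-)$ is a complex, and showing that its filtered quasi-isomorphism type is unchanged under the moves relating any two grid presentations of a fixed link.

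First I would prove $\partial^-\circ\partial^-=0$. The coefficient of $\y$ in $\partial^-\partial^-\x$ is a signed count of domains from $\x$ to $\y$ that decompose as a juxtaposition of two empty rectangles; by the combinatorial analysis of \cite{MOS} these occur in pairs $r_1*r_2$, $r_1'*r_2'$ with equal $U$-power and equal underlying domain (together with degenerate ``thin'' domains that are handled separately). The input needed over $\zzz$ is the \emph{square relation}: every such pair satisfies $\S(r_1)\S(r_2)=-\S(r_1')\S(r_2')$. So the real work is to construct a sign assignment $\S:\Rect^\circ\to\{\pm1\}$ with this property and to show that any two such differ by a ``gauge transformation'' $\S'(r)=\epsilon(\x)\epsilon(\y)\S(r)$ for a function $\epsilon$ on generators, which makes the two resulting complexes canonically isomorphic so that the quasi-isomorphism type is well defined. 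I would do this by prescribing $\S$ on a spanning family of rectangles and extending, the consistency being exactly the assertion that a certain $\zzz/2\zzz$-valued $2$-cocycle on the ``graph'' of generators is a coboundary; in the body of the paper this cocycle is identified with the class of the spin extension $\widetilde{\ss_n}$ of the symmetric group, which yields existence and essential uniqueness cleanly.

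Next, invariance. By the classical fact that any two grid diagrams of the same oriented link are connected by a finite sequence of commutation and (de)stabilization moves, it suffices to produce, for each such move $G\rightsquigarrow G'$, a filtered and $U_{O_i}$-equivariant chain homotopy equivalence between $C^-(G)$ and $C^-(G')$ over $\zzz[U_{O_1},\dots,U_{O_l}]$. For a commutation move the map is a signed count of empty pentagons (elements of $\Pent^\circ$); it is a chain map by the pentagon--rectangle analogue of the square relation, and one exhibits an explicit homotopy inverse and chain homotopies by signed counts of empty hexagons (elements of $\Hex^\circ$), all the identities being sign-refined forms of those in \cite{MOS}. A stabilization changes the grid size by one; here one identifies $C^-(G')$ with a mapping cone built from $C^-(G)$ (after setting the appropriate $U$-variables equal, which is why invariance holds over $\zzz[U_{O_1},\dots,U_{O_l}]$) and invokes the standard algebraic comparison of a complex with such a mapping cone, the comparison maps being again signed pentagon/hexagon counts. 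In every case one must also check that the maps preserve the Alexander filtration $A$ and the Maslov grading $M$; this is the filtered refinement of the degree computations of \cite{MOS} and is bookkeeping once the formulas for $M$ and $A$ above are in hand.

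The main obstacle is entirely in the signs: constructing $\S$ and, above all, verifying the whole family of sign-compatibility relations — the square relation for $\partial^2=0$ and its pentagon and hexagon counterparts for the move maps and homotopies — and checking these are compatible with the gauge ambiguity so that the comparison maps are well defined up to the correct equivalence. Everything else is a filtered, $U$-equivariant transcription of the arguments already carried out mod $2$ in \cite{MOS}. Organizing the sign data through the spin extension $\widetilde{\ss_n}\to\ss_n$, as we do below, is precisely what makes these relations manageable: each of them becomes a manifestation of the single cohomology class classifying that extension, pulled back along the maps recording the geometry of rectangles, pentagons and hexagons.
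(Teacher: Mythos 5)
Your sketch is a faithful high-level summary of the original proof in \cite{MOST} --- construct a sign assignment $\S$, insert $\S(r)$ into the formulas, and rerun the mod-$2$ arguments --- but it is not the route taken in this paper. The paper recalls this statement from \cite{MOST} without reproof; instead it proves the parallel Theorem~\ref{theo:filtered-quasi-iso-type} for its own complex $\wcm(G)=\Lambda[\wss_n]/\langle z+1\rangle$, where a rectangle, pentagon or hexagon is recorded as an element $\wtau_{i,j}\in\wss_n$ and the differential and comparison maps act by right multiplication. All the cancellations you list --- $\partial^2=0$, the commutation chain-map and homotopy identities, the stabilization decomposition lemmas --- then fall out of relations \eqref{eq:z-and-tau}--\eqref{eq:conjugation} and Lemma~\ref{lemma:conjugation}, with no explicit $\S$ ever written. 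The formula $\S(r)=\varepsilon(r)\,c(\x^{-1}\y,\x)$ appears only in Section~\ref{section:sign-assignment}, as a byproduct of choosing a section $s:\ss_n\to\wss_n$, and $s$ then gives a filtered isomorphism $(C^-,\partial^-)\cong(\wcm,\wdm)$ that transports invariance back to $C^-$. The gain over your plan is uniformity: each sign verification becomes a single computation in $\wss_n$.

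Two concrete points in your sketch would need fixing. First, your obstruction-theoretic account of the signs is muddled: the class in $H^2(\ss_n,\zzz/2\zzz)$ in play is that of the spin extension, which the paper proves is \emph{non}-trivial for $n\ge 4$; it is precisely the non-vanishing of the cocycle $c$ (e.g.\ $c(\tau_{i,j},\tau_{i,j})=-1$ and the anticommutation of disjoint transpositions up to $z$) that supplies the $-1$ in (Sq) and distinguishes (V) from (H). The paper does not establish existence of $\S$ by showing some cocycle is a coboundary; it writes $\S$ down directly from $c$. Second, the stabilization comparison maps $F^L,F^R$ are not pentagon/hexagon counts: they count polygonal domains of type $L$ and $R$ of arbitrary complexity (the sets $\pi^L,\pi^R$), organized via the standard decomposition into rectangles. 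Pentagons and hexagons appear only in the commutation step. You also omit the cyclic-permutation move, which the paper handles separately with the maps $\wx\mapsto\widetilde\sigma\wx$ and $\wx\mapsto(-1)^{\varepsilon(\sigma)\varepsilon(\x)}\wx\widetilde\sigma^{-1}$.
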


In this paper, we give a way to refine the complex over $\zzz$ thanks to $\wss_n$ the spin extension of $\ss_n$ which is a non-trivial central extension of $\ss_n$ by $\zzz/2\zzz$.
In section \ref{section:algebraic-preliminaries} we define the spin extension $\wss_n$ and make some algebraic calculus.
Let $z$ be the unique non-trivial central of $\wss_n$ and $\Lambda = \zzz[U_{O_1},\ldots,U_{O_n}]$.
In section \ref{section:definition-complex} we define a filtered chain complex $(\wcm(G),\wdm)$ where $\wcm(G)$ is the quotient module of the free $\Lambda$-module with generating set $\wss_n$ by the submodule generated by $\{z+1\}$.
In section \ref{section:properties} we prove that the filtered homology of $(\wcm(G)\otimes \zzz/2\zzz,\wdm)$ coincides with the filtered homology of the chain complex $(C^-(G),\partial^-)$ with coefficients in $\zzz/2\zzz$.
Then, following the proof in \cite{MOST} we obtain

\begin{theorem}
  Let $\overrightarrow L$ be an oriented link with $l$ components.
We number the $\Os$ so that $O_1,\ldots,O_l$ correspond to the different components of $\overrightarrow L$.
Then the filtered quasi-isomorphism type of $(\wcm(G),\wdm)$ over $\zzz[U_{O_1},\ldots,U_{O_l}]$ is an invariant of the link.
\end{theorem}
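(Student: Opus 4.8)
The plan is to run the invariance argument of \cite{MOST} essentially verbatim, replacing every occurrence of the sign assignment $\S$ by the corresponding element of the spin extension $\wss_n$. By the grid--diagram theorem of Cromwell, refined by Dynnikov to account for cyclic moves, any two grid diagrams presenting $\orL$ are joined by a finite sequence of commutation moves (row or column, interleaving or nested) and (de)stabilizations, so it suffices to exhibit, for each such move $G \leadsto G'$, a filtered homotopy equivalence between $(\wcm(G),\wdm)$ and $(\wcm(G'),\wdm)$ over $\zzz[U_{O_1},\dots,U_{O_l}]$, together with the auxiliary fact --- also needed in \cite{MOST} --- that $U_{O_i}$ and $U_{O_j}$ are filtered chain homotopic whenever $O_i$ and $O_j$ lie on the same component, so that the $\zzz[U_{O_1},\dots,U_{O_l}]$--module structure on homology is well defined and respected by the equivalences.

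For a commutation move, mimic the pentagon--counting map of \cite{MOST}: set
\[
P(\wx) \;=\; \sum_{\wy\in\wss_n}\ \sum_{p\in\EmptyPent(\wx,\wy)} U_{O_1}^{O_1(p)}\cdots U_{O_n}^{O_n(p)}\cdot\wy,
\]
where now $\wx,\wy$ range over $\wss_n$ and the sign --- dictated in \cite{MOST} by $\S$ --- is carried by the choice of lift $\wy$ prescribed by $\wx$ and the element of $\wss_n$ (a lift of a suitable permutation, possibly modified by $z$) attached to $p$. One must check that (i) $P$ is a chain map, (ii) $P$ preserves the Maslov grading and does not raise the Alexander filtration, and (iii) $P$ is a homotopy equivalence. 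Item (ii) is identical to \cite{MOST}, since the gradings are combinatorial and untouched by the signs. Item (i) reduces, exactly as the identity $\wdm\circ\wdm=0$ was checked in section~\ref{section:definition-complex}, to a single identity between two products in $\wss_n$ for each way a height--two composite domain decomposes, and these identities hold because $\wss_n$ is an honest group; this is precisely where the spin formulation replaces the case--by--case verification of the square, pentagon and hexagon relations for $\S$.

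For item (iii) one may argue as in \cite{MOST}, running the homotopy--equivalence argument with all rectangle, pentagon and hexagon counts weighted by their $\wss_n$--elements; alternatively one may reduce modulo $2$: by the Proposition of section~\ref{section:properties}, $\wcm(G)\otimes\Field$ is filtered isomorphic to $(C^-(G),\partial^-)\otimes\Field$ and $P\otimes\Field$ agrees up to sign with the \cite{MOST} pentagon map, hence is a filtered quasi--isomorphism, and it then remains only to lift this conclusion to $\zzz$, using that the filtered mapping cone of $P$ is a complex of finitely generated free $\Lambda$--modules, bounded in each filtration level, whose mod $2$ reduction is acyclic.

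The stabilization move is the main obstacle, just as it is in \cite{MOST}, because it raises the complexity from $n$ to $n+1$: one first fixes, using the computations of section~\ref{section:algebraic-preliminaries}, a compatible inclusion $\wss_n\hookrightarrow\wss_{n+1}$ of central extensions (carrying $z$ to $z$), and checks that the destabilized generators and the differential behave well under it. Then, following \cite{MOST}, one performs a change of basis exhibiting $\wcm(G')$ as a filtered mapping cone of a map between a subcomplex and a quotient complex, identifies the subcomplex --- up to filtered homotopy equivalence and the action of a $U$--variable --- with $\wcm(G)$, shows the complementary piece is acyclic, and reads off the desired filtered quasi--isomorphism. Every rectangle, pentagon and hexagon count occurring in these pieces and in the connecting homotopies must again be decorated with its spin element, and the relations one needs are exactly the commutation and associativity identities in $\wss_{n+1}$ established in section~\ref{section:algebraic-preliminaries}; the delicate part is the bookkeeping, region by region, of which $O$'s contribute which $U$--powers and of which of the two lifts (differing by $z$, that is, by the sign) of each transposition is selected. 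Granting this, the filtered homotopy type of $(\wcm(G),\wdm)$ over $\zzz[U_{O_1},\dots,U_{O_l}]$ is unchanged by every grid move, which proves the theorem.
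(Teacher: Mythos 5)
Your overall strategy matches the paper's --- run the \cite{MOST} argument move by move, decorating rectangle, pentagon and hexagon counts with elements of $\wss_n$ --- but there are two genuine gaps and one incorrect alternative.

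First, your move list omits cyclic permutation, which the paper treats as a separate elementary move, and for a reason: mod $2$ the complex depends only on the toroidal grid, so cyclic permutation is invisible, but with the spin extension the identification of generators with $\wss_n$ depends on the planar cut. The paper's vertical cyclic permutation is left multiplication by $\widetilde{\sigma}=\wtau_0\cdots\wtau_{n-2}$, and the horizontal one is $\wx\mapsto(-1)^{\varepsilon(\sigma)\varepsilon(\x)}\,\wx\cdot\widetilde{\sigma}^{-1}$; that extra sign factor is essential and does not come for free from the group law.

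Second, you claim the pentagon map carries its signs implicitly through the choice of lift $\wy=\wx\cdot\wtau_{i,j}$, because ``$\wss_n$ is an honest group.'' That is not so. Once the convention (bottom-left corner in the $i$-th circle singles out $\wtau_{i,j}$) is fixed, the target generator is determined and $\Phi_{\beta\gamma}$ is \emph{not} a chain map without an explicit external sign. The paper weights each pentagon by $(-1)^{M(\wx)}\varepsilon_{\beta\gamma}(\widetilde p)$, where $\varepsilon_{\beta\gamma}=\pm1$ according as the pentagon is a left or right pentagon; in the key case of figure~\ref{fig:phi-morphisme-1}, where a composite domain is cut either as a left pentagon followed by a rectangle or as a rectangle followed by a right pentagon, one has $\wr'\cdot\widetilde p'=\widetilde p\cdot\wr$ with \emph{no} intervening $z$, so the cancellation comes entirely from $\varepsilon_{\beta\gamma}$ together with the Maslov shift $M(\wx)=M(\wx\cdot\wr)+1$. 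The spin extension does take care of the relations a sign assignment must satisfy on composite rectangles (this is precisely the verification that $\wdm\circ\wdm=0$), but the pentagon, hexagon, and destabilization maps require these additional Maslov and left/right signs on top of the group structure.

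Third, your shortcut for showing $\Phi_{\beta\gamma}$ is a quasi-isomorphism --- reduce mod $2$ and ``lift'' via acyclicity of the mapping cone --- is false as a general principle: the complex $\Lambda\xrightarrow{3}\Lambda$ is a bounded complex of finitely generated free $\Lambda$-modules whose mod-$2$ reduction is acyclic, yet it is not acyclic over $\zzz$. The correct route is the one you mention first and the one the paper takes: construct the hexagon homotopy $H_{\beta\gamma\beta}$ explicitly and verify the homotopy identity, which again requires tracking the left/right signs and the Maslov shift, not just working in $\wss_n$.
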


Finally, in section \ref{section:sign-assignment}, we prove that our chain complex defines a sign assignment in the sense of \cite{MOST} and that $(\wcm(G),\wdm)$ is filtered quasi-isomorphic to $(C^-(G),\partial^-)$ with coefficients in $\zzz$.
\section{Algebraic preliminaries}\label{section:algebraic-preliminaries}
Let $\mathfrak{S}_n$ be the group of bijections of a set with $n$ elements numbered from $0$ to $n-1$.
It is given in terms of generators and relations where the set of generators is $\{\tau_i\}_{i=0}^{n-2} $ with $\tau_{i}$ the transposition which exchanges $i$ and $i+1$ and relations are
$$\tau_{i}^2=\mathbf{1} \qquad 0\leq i \leq n-2 $$
$$\tau_{i}. \tau_{j}=\tau_{j}. \tau_{i}  \qquad \vert i-j \vert >1, \quad 0 \leq i,j\leq n-2 $$
$$\tau_{i} . \tau_{i+1} . \tau_{i}=\tau_{i+1} . \tau_{i}. \tau_{i+1} \qquad 0\leq i \leq n-3  $$

\begin{proposition}
  The group given by generators and relations
$$
\begin{tabular}{cl}
$\wss_n = < \wtau_0,\ldots,\wtau_{n-2},z \vert$ & $z^2 = \widetilde{\mathbf{1}} , z\wtau_i = \wtau_i z , \wtau_i^2=z,\quad 0\leq i\leq n-2$; \\
& $\wtau_{i} . \wtau_{j}= z \wtau_{j} . \wtau_{i}  \qquad \vert i-j \vert >1, \quad 0 \leq i,j\leq n-2; $ \\
& $\wtau_{i} .\wtau_{i+1} . \wtau_{i}=\wtau_{i+1} . \wtau_{i}. \wtau_{i+1} \qquad 0\leq i \leq n-3 \;> $
\end{tabular}
$$
is a non-trivial central extension ($n\geq 4$) of $\ss_n$ by $\zzz/2\zzz$ called the spin extension of $\ss_n$.
\end{proposition}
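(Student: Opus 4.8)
The plan is to build the projection onto $\ss_n$, identify its kernel with $\langle z\rangle$ by a purely formal argument, and then exhibit a concrete representation certifying that $z\neq\wun$. First I would check that the assignment $\wtau_i\mapsto\tau_i$, $z\mapsto\un$ sends each defining relation of $\wss_n$ to one holding in $\ss_n$: $\wtau_i^2=z$ becomes $\tau_i^2=\un$, the far-commutation $\wtau_i\wtau_j=z\wtau_j\wtau_i$ becomes $\tau_i\tau_j=\tau_j\tau_i$, the braid relation is preserved verbatim, and the relations $z^2=\wun$, $z\wtau_i=\wtau_iz$ become trivial. Hence there is a homomorphism $\pi\colon\wss_n\to\ss_n$, surjective since the $\tau_i$ generate $\ss_n$; in particular $z\in\ker\pi$.

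Next, the defining relations make $\langle z\rangle$ a central subgroup of $\wss_n$ of order dividing $2$, so $\pi$ factors through the quotient $q\colon\wss_n\to\wss_n/\langle z\rangle$ as $\pi=\overline{\pi}\circ q$. Reading the defining relations of $\wss_n$ modulo $\langle z\rangle$ (where $z\equiv\un$) produces exactly the Coxeter presentation of $\ss_n$, hence a surjection $\phi\colon\ss_n\twoheadrightarrow\wss_n/\langle z\rangle$ with $\phi(\tau_i)=q(\wtau_i)$. As $\overline{\pi}\circ\phi$ fixes every $\tau_i$, it equals $\id_{\ss_n}$, so $\phi$ is an isomorphism; thus $\ker\pi=\langle z\rangle$ and $\abs{\wss_n}\le 2\,n!$. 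Therefore $\wss_n$ is a central extension of $\ss_n$ by the cyclic group $\langle z\rangle$, and the statement reduces to proving $z\neq\wun$, which then also forces $\abs{\wss_n}=2\,n!$.

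To see $z\neq\wun$ I would detect $z$ in a Clifford algebra. Let $\mathrm{Cl}(n)$ be the real algebra generated by $e_1,\dots,e_n$ with $e_i^2=-1$ and $e_ie_j=-e_je_i$ for $i\neq j$, and set $\rho(\wtau_i)=\tfrac1{\sqrt2}(e_i-e_{i+1})$ and $\rho(z)=-1$. A direct computation gives $\rho(\wtau_i)^2=-1$, $\rho(\wtau_i)\rho(\wtau_j)=-\rho(\wtau_j)\rho(\wtau_i)$ when $\abs{i-j}>1$, and $\rho(\wtau_i)\rho(\wtau_{i+1})\rho(\wtau_i)=\rho(\wtau_{i+1})\rho(\wtau_i)\rho(\wtau_{i+1})=\tfrac1{\sqrt2}(e_i-e_{i+2})$, so $\rho$ respects all the relations and extends to a homomorphism $\wss_n\to\mathrm{Cl}(n)^\times$; since $-1\neq1$ in $\mathrm{Cl}(n)$ we conclude $z\neq\wun$. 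Equivalently, one may appeal to Schur's classical presentations of the double covers of $\ss_n$.

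It remains to observe the extension is non-trivial. If it split there would be a homomorphic section $s\colon\ss_n\to\wss_n$; then $s(\tau_i)$ would be a lift of $\tau_i$ of order $2$, but the only lifts of $\tau_i$ are $\wtau_i$ and $z\wtau_i$, both of order $4$ since they square to $z\neq\wun$ — impossible. I expect the one step with real content to be the Clifford-algebra construction: the presentation on its own cannot distinguish $z$ from $\wun$, so one genuinely has to produce a nontrivial central extension (equivalently, a representation detecting $z$) before one can assert that $\langle z\rangle\cong\zzz/2\zzz$; the remaining steps are formal.
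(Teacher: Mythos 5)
Your proof is correct. The framework (construct the projection $\pi$, show $\ker\pi=\langle z\rangle$ by matching presentations of $\wss_n/\langle z\rangle$ and $\ss_n$, then detect $z\neq\wun$ in a representation) is the same as the paper's, but your treatment of the crucial non-degeneracy step is cleaner and more complete. The paper asserts that the subgroup $\langle\wtau_0,\wtau_2,z\rangle\subset\wss_n$ is isomorphic to $Q_8$ and deduces $z\neq\wun$; as written that is a bit circular, since one cannot identify a subgroup given by abstract generators with $Q_8$ (rather than some quotient of $Q_8$) without first exhibiting a homomorphism on the ambient group detecting $z$. Your Clifford construction $\rho\colon\wss_n\to\mathrm{Cl}(n)^\times$, $\wtau_i\mapsto\tfrac1{\sqrt2}(e_i-e_{i+1})$, supplies exactly such a homomorphism on all of $\wss_n$ (and indeed restricts on $\langle\wtau_0,\wtau_2,z\rangle$ to the quaternionic picture the paper has in mind, since $\mathrm{Cl}(2)\cong\mathbb H$), so it both repairs and subsumes the paper's argument. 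You also make explicit two points the paper glosses over: that $\abs{\wss_n}=2\,n!$ is only forced after $z\neq\wun$ is known, and that non-splitness is a further (easy) deduction from $\wtau_i^2=z\neq\wun$ via the order-$2$ versus order-$4$ obstruction, rather than being synonymous with $z\neq\wun$.
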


\begin{rembis}
  The terminology \emph{spin extension} of $\ss_n$ is inspired by \cite{Bessenrodt}, \cite{dijkgraaf-1999}.
\end{rembis}

\begin{proof}
Let $p:\wss_n \rightarrow \ss_n$ be the morphism given on generators by $p(\wtau_i)=\tau_i$ and $p(z)=\un$: it is onto.
The central subgroup $H = \{\widetilde{\mathbf{1}},z\}\subset \wss_n$ is such that $H \subset \ker p$.
The group $\wss_n /H$ admits a presentation by generators and relations which coincides with the one for $\ss_n$ and so $\overline{p}:\wss_n /H \rightarrow \ss_n$ is an isomorphism.
In particular $\vert \wss_n\vert = 2n!$ and $\wss_n$ is the following central extension of $\ss_n$ by $\zzz/2\zzz$:
$$\xymatrix{
1 \ar[r] & \zzz/2\zzz \ar@{^{(}->}[r]^{i} & \wss_n \ar[r]^p & \ss_n \ar[r]  & 1
}$$
We prove that it is non-trivial.
Let $\mathbb{Q}_8$ be the subgroup of $\wss_n$ generated by $\wtau_0,\wtau_2,z$.
Then $\mathbb{Q}_8$ is isomorphic to the unit sphere in the space of quaternions intersected with the lattice $\zzz^4$ by a morphism $\Phi$ such that $\Phi(\wtau_0)=i$, $\Phi(\wtau_2)=j$, $\Phi(\wtau_0 . \wtau_2)=k$ and $\Phi(z)=-1$.
Therefore $\wss_n$ is non-trivial.
\end{proof}

For $i<j$, define
$$
\wtau_{i,j} = \wtau_i . \wtau_{i+1} . \,\ldots \,.\wtau_{j-2} . \wtau_{j-1} .\wtau_{j-2} . \, \ldots \,.\wtau_{i+1} . \wtau_i
$$
and $\wtau_{j,i} = z\wtau_{i,j}$.

Let $\varepsilon:\ss_n \rightarrow \{0,1\}$ be the signature morphism.
\begin{lemma}\label{lemma:conjugation}
  Let $\wx = \wtau_{i_1} .\wtau_{i_2} . \ldots . \wtau_{i_k}$ be an element in $\wss_n$ and $\x = p(\wx)\in \ss_n$.
Then for any $0 \leq i\neq j \leq n-1$
$$
\wx . \wtau_{i,j} . \wx^{-1} = z^{\varepsilon(\x)} \wtau_{\x(i),\x(j)}
$$
\end{lemma}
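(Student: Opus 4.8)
The plan is to reduce the general statement to the case where $\wx$ is a single generator $\wtau_m$, and then to verify that case by a direct computation inside $\wss_n$ using only the defining relations. First I would observe that both sides of the identity are multiplicative in $\wx$ in a suitable sense: if the formula holds for $\wx$ and for $\wy$, then writing $\x = p(\wx)$, $\y = p(\wy)$ we have
$$
(\wx\wy).\wtau_{i,j}.(\wx\wy)^{-1} = \wx.\bigl(z^{\varepsilon(\y)}\wtau_{\y(i),\y(j)}\bigr).\wx^{-1} = z^{\varepsilon(\y)}\cdot z^{\varepsilon(\x)}\wtau_{\x\y(i),\x\y(j)},
$$
using that $z$ is central, that $\wx$ conjugates $\wtau_{\y(i),\y(j)}$ (this still makes sense when $\y(i)>\y(j)$, since $\wtau_{j',i'} = z\wtau_{i',j'}$ and $z$ is central), and that $\varepsilon$ is a homomorphism so $\varepsilon(\x)+\varepsilon(\y) = \varepsilon(\x\y)$. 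Hence it suffices to prove the lemma when $\wx = \wtau_m$ for a single $0\le m\le n-2$; note $\varepsilon(\tau_m) = 1$, so the target is $z\,\wtau_{\tau_m(i),\tau_m(j)}$.

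Next I would reduce the single-generator case to a bounded case-check. The permutation $\tau_m$ fixes every index outside $\{m,m+1\}$ and swaps $m$ and $m+1$. Accordingly, I would split into cases according to the position of $\{i,j\}$ relative to $\{m,m+1\}$: (a) $\{i,j\}\cap\{m,m+1\}=\emptyset$, where $\tau_m(i)=i$, $\tau_m(j)=j$ and one must show $\wtau_m.\wtau_{i,j}.\wtau_m^{-1} = z\,\wtau_{i,j}$, i.e. that $\wtau_m$ and $\wtau_{i,j}$ "anticommute up to $z$" exactly when their supports are disjoint-but-linked in the appropriate sense, and commute otherwise; (b) one of $i,j$ lies in $\{m,m+1\}$; (c) $\{i,j\}=\{m,m+1\}$, where $\wtau_{i,j}=\wtau_m$ and the claim is the tautology $\wtau_m.\wtau_m.\wtau_m^{-1}=\wtau_m = z\,\wtau_{m+1,m}$ after unwinding $\wtau_{m+1,m}=z\wtau_m$ and $\wtau_m^2 = z$. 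Since $\wtau_{i,j}$ is by definition the palindromic word $\wtau_i\wtau_{i+1}\cdots\wtau_{j-1}\cdots\wtau_{i+1}\wtau_i$, in each case I can compute $\wtau_m.\wtau_{i,j}.\wtau_m^{-1}$ by pushing $\wtau_m$ through this word one letter at a time, invoking the far-commutation relation $\wtau_a\wtau_b = z\wtau_b\wtau_a$ for $|a-b|>1$ and the braid relation $\wtau_a\wtau_{a+1}\wtau_a = \wtau_{a+1}\wtau_a\wtau_{a+1}$ for adjacent indices, and keeping careful track of how many central factors $z$ are produced; the braid relation itself contributes no $z$, while each far-commutation move contributes one $z$, and the count of such moves has the right parity to produce exactly the single $z$ demanded by $\varepsilon(\tau_m)=1$.

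The main obstacle will be the bookkeeping of the central factors $z$ in case (b) (and the subtler sub-cases of (a)): one has to move $\wtau_m$ symmetrically through a palindrome, and the braid-type rewrites can rearrange which generators are adjacent, so one must confirm that the total number of far-commutation swaps used is exactly odd. A clean way to manage this is to work in the projection: every relation of $\wss_n$ maps to a relation of $\ss_n$, and $\wss_n$ is a central $\zzz/2\zzz$-extension, so the element $\wtau_m.\wtau_{i,j}.\wtau_m^{-1}\cdot\wtau_{\tau_m(i),\tau_m(j)}^{-1}$ automatically lies in $\{\wun,z\}$; it then only remains to decide which of the two it is. One can pin this down either by the explicit letter-by-letter computation sketched above, or — more economically — by exhibiting a single faithful-enough representation (for instance, extending the quaternion calculation from the Proposition, or using the standard embedding of $\wss_n$ into a Clifford algebra where $\wtau_i\mapsto \tfrac{1}{\sqrt2}(e_i-e_{i+1})$) in which both sides can be evaluated and compared directly. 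Either route disposes of all cases, completing the proof.
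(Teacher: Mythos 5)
Your proposal is correct and shares its overall skeleton with the paper's proof: both reduce to the length-one case $\wx=\wtau_m$ (you by observing multiplicativity of the identity, the paper by induction on the word length $k$, which amounts to the same thing once one notices that the inductive step just re-applies the base case to $\wtau_{i_1}$), and both then dispatch the base case by analyzing the position of $m$ relative to $\{i,j\}$.

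Where you genuinely diverge from the paper is in how you propose to finish the base case. The paper grinds out six explicit letter-by-letter rewrites (one of them itself an inner induction on $l-i$), tracking each application of the far-commutation and braid relations to see how many central factors accumulate. You instead observe that since $p$ is a homomorphism with kernel $\{\wun,z\}$ and $\tau_m\tau_{i,j}\tau_m^{-1}=\tau_{\tau_m(i),\tau_m(j)}$ already holds in $\ss_n$, the element $\wtau_m\wtau_{i,j}\wtau_m^{-1}\cdot\wtau_{\tau_m(i),\tau_m(j)}^{-1}$ is \emph{forced} to lie in $\{\wun,z\}$, so the entire content of the lemma is a single sign in each case; and you suggest reading that sign off in a concrete model such as the negative-definite Clifford algebra representation $\wtau_i\mapsto\tfrac{1}{\sqrt2}(e_i-e_{i+1})$, where $\wtau_{i,j}\mapsto\tfrac{1}{\sqrt2}(e_i-e_j)$ and the conjugation is a short algebraic computation. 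This structural shortcut is a real economy the paper does not exploit, and it is sound: one only needs the representation to respect the defining relations and to separate $\wun$ from $z$, both of which hold here. The cost is that your write-up remains a sketch: you identify the place where the work lives (the parity of far-commutation moves, or equivalently the Clifford computation case by case) but do not carry it through, whereas the paper's six cases, while less conceptual, are actually written out in full.
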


\begin{proof}
Since $\wx = \wtau_{i_1} .\wtau_{i_2} . \ldots . \wtau_{i_k}$, $\wx^{-1} = z^{\varepsilon(\x)}\wtau_{i_k} . \ldots . \wtau_{i_1}$.
We prove by induction on $k\geq 1$ that for any $i,j\in \{0,\ldots,n-1\}$ we have $\wx . \wtau_{i,j} . \wx^{-1} = z^{\varepsilon(\x)} \wtau_{\x(i),\x(j)}$.
\begin{itemize}
\item \textbf{Initialization.} Let $\wx = \wtau_{l}$ and $0\leq i<j \leq n-1$.
So $\wtau_{l}^{-1} = z  \wtau_{l}$ and $\varepsilon(\x) = 1$.
There are several cases.
\begin{itemize}
\item \textbf{Case 1: $l<i-1$ or $l>j$.} $\wx . \wtau_{i,j} . z\wx = z\tau_{i,j}$.
\item \textbf{Case 2: $l=i-1$.} $\wx . \wtau_{i,j} . z\wx = z \wtau_{i-1}.\wtau_{i,j} . \wtau_{i-1} =z \wtau_{i-1,j}$ by definition.
\item \textbf{Case 3: $l=i$.} $\wtau_i . \wtau_{i,j} . z\wtau_{i} = z \wtau_{i+1,j}$.
\item \textbf{Case 4: $i< l <j-1$.} We prove by induction on $l-i\geq 1$ for $i,j$ fixed that $ \wtau_l . \wtau_{i,j} . z\wtau_l = z \wtau_{\tau(i),\tau(j)}$.
For $l=i+1$ then we have
$$
\begin{array}{rcl}
  \wtau_{i+1} . \wtau_{i,j} . z\wtau_{i+1} & =  & z \wtau_i . \wtau_{i+1}.\wtau_i .\wtau_{i+2,j}.\wtau_i . \wtau_{i+1}.\wtau_i \\
& = & z \wtau_i . \wtau_{i+1} . \wtau_{i+2,j}.\wtau_{i+1}.\wtau_i \\
& = & z \wtau_{i,j}
\end{array}
$$
Suppose it is proved until rank $(l-1)-i$.
Then for $\wx = \wtau_{l}$ with $l<j-1$ we have
$$
\begin{array}{rcl}
  \wx . \wtau_{i} . z \wx  & = & z \wtau_{l}.\wtau_{i,j} . \wtau_{l}\\
& = & z (\wtau_i. \ldots . \wtau_{l-2}) . (\wtau_l . \wtau_{l-1}.\wtau_l).\wtau_{l-1,j}. (\wtau_l.\wtau_{l-1} . \wtau_l) .(\wtau_{l-2} . \ldots .\wtau_{i}) \\

& = & z (\wtau_i. \ldots . \wtau_{l-2}) .( \wtau_{l-1}.\wtau_l . \wtau_{l-1}).\wtau_{l-1,j}.(\wtau_{l-1}.\wtau_{l} . \wtau_{l-1}) .(\wtau_{l-2} . \ldots .\wtau_{i}) \\

& = & z(\wtau_i. \ldots . \wtau_{l-1} . \wtau_{l}) . \wtau_{l-1,j} . (\wtau_l . \wtau_{l-1} . \ldots . \wtau_i) \; \text{by induction}\\

& = & z(\wtau_i. \ldots . \wtau_{l-1}) . \wtau_{l,j}. ( \wtau_{l-1} . \ldots . \wtau_i)\; \text{by induction}\\

& = & z \wtau_{i,j} \; \text{by case 2}\\
\end{array}
$$
\item \textbf{Case 5: $l=j-1$.} $\wtau_{j-1} . \wtau_{i,j} . z\wtau_{j-1} = z (\wtau_i . \ldots .\wtau_{j-3}).\wtau_{j-1}.\wtau_{j-2} . \wtau_{j-1}.\wtau_{j-2}.\wtau_{j-1}.(\wtau_{j-3}.\ldots.\wtau_{i})= z \wtau_{i,j-1}$.

\item \textbf{Case 6: $l=j$.} $\wtau_{j} . \wtau_{i,j} . z\wtau_{j} = z (\wtau_i . \ldots .\wtau_{j-2}).\wtau_{j}.\wtau_{j-1} . \wtau_{j}.(\wtau_{j-2}.\ldots.\wtau_{i})= z \wtau_{i,j+1}$.
\end{itemize}

\item \textbf{Heredity.} Suppose the property is true until rank $k$.
Let $\wx = \wtau_{i_1} .\wtau_{i_2} . \ldots . \wtau_{i_k}$ and $\wtau_{i,j}$ be two elements in $\wss_n$.
Denote $\wy = \wtau_{i_2} . \ldots . \wtau_{i_k}$.
Then $\wx . \wtau_{i,j} . \wx^{-1} = \wtau_{i_1} . \wy . \wtau_{i,j}  . \wy^{-1} . z \wtau_{i_1}$.
By induction hypothesis,
$$
 \wy . \wtau_{i,j}  . \wy^{-1} = z^{\varepsilon(\y)} . \wtau_{\y(i),\y(j)}
$$
So, $\wx . \wtau_{i,j} . \wx^{-1} = \wtau_{i_1} . z^{\varepsilon(\y)} . \wtau_{\y(i),\y(j)} . z \wtau_{i_1}$.
By induction hypothesis one more time,
$$
\wx . \wtau_{i,j} . \wx^{-1} = z^{\varepsilon(\y)+1}\wtau_{\tau_{i_1}. \y(i),\tau_{i_1}. \y(j)} = z^{\varepsilon(\x)}.\wtau_{\x(i),\x(j)}
$$
\end{itemize}

\end{proof}

The group $\wss_n$ has another presentation in terms of generators and relations.
Take $\{z'\} \cup \{\wtau_{i,j}' \}_{i\neq j}$ where $0 \leq i,j\leq n-1$ as the set of generators with the following relations:

\begin{equation}\label{eq:z-and-tau}
 z'.z' = \wun' \qquad z'\wtau_{i,j}' = \wtau_{i,j}' z' \qquad \wtau_{i,j}' = z'\wtau_{j,i}' \qquad \wtau_{i,j}'.\wtau_{i,j}' = z' \qquad \text{ for any }i,j
\end{equation}

\begin{equation}\label{eq:anticommutation}
  \wtau_{i,j}'.\wtau_{k,l}'  = z' \wtau_{k,l}' . \wtau_{i,j}' \qquad \text{ for any } i,j,k,l \text{ if } \{i,j\}\cap \{k,l\}=\emptyset
\end{equation}

\begin{equation}\label{eq:conjugation}
  \wtau_{i,j}'.\wtau_{j,k}' . \wtau_{i,j}' = \wtau_{j,k}' . \wtau_{i,j}' . \wtau_{j,k}' = \wtau_{i,k}'\qquad  \text{ for any }i,j,k
\end{equation}

\begin{proof}
  Let $\wss_n$ the group with $z$ and $\wtau_i$ as generators and $\wss_{n}'$ the other one.
Define $\phi:\wss_n \rightarrow \wss_{n}'$ given on generators by $\phi(\wtau_i )=\wtau_{i,i+1}'$, $\phi(z)=z'$.
For $i<j$, let $\phi(\wtau_{i,j}) = \wtau_{i,j}'$.
By definition, \eqref{eq:z-and-tau} is verified.
Lemma \ref{lemma:conjugation} gives equations \eqref{eq:anticommutation} and \eqref{eq:conjugation}.
So the map $\phi$ extends to a group isomorphism.
\end{proof}

In what follows, we drop the prime exponent and only reffer to $\wtau_{i,j}$ and $z$ ($\wtau_i$ means $\wtau_{i,i+1}$).
\section{The chain complex}\label{section:definition-complex}
Let $G$ be a grid presentation with complexity $n$ of the link $\overrightarrow L$.
Let $\Lambda$ denote the ring $\zzz[U_{O_1},\ldots,U_{O_n}]$.
We define $\widetilde{C}^-(G)$ to be the free $\Lambda$-module with generating set $\wss_n$ quotiented by the submodule generated by $\{z+1\}$ i.e.
$$
\widetilde{C}^-(G) = \Lambda[\wss_n] / <z+1>
$$
Considered as module, $\widetilde{C}^-(G)$ coincides with the free $\Lambda$-module with generating set $\ss_n$.
But we can also consider the structure of algebra of $\widetilde{C}^-(G)$ over $\Lambda$.
In this case, one can think of $\widetilde{C}^-(G)$ as the group algebra of $\ss_n$ over $\Lambda$ where the product is twisted by a non-trivial 2-cocycle (see section \ref{section:sign-assignment}).

We endow the set of generators with a Maslov grading $M$ and an Alexander filtration $A$ given by:
$$ M(\wx) = M( \x ) $$
$$ A(\wx) = A( \x ) $$

Let $\wx$ and $\wy$ be two elements of $\wss_n$.
The set of rectangles $\Rect(\wx,\wy)$ connecting $\wx$ to $\wy$ is the empty set if $\wy \neq  \wx.\wtau_{i,j}$ for all $i\neq j$, else it is $\wtau_{i,j}$.

If we consider the set $\Rect(\x,\y)$ of rectangles connecting $\x$ to $\y$ as in \cite{MOST}, either it is the empty set, or it consists of two rectangles.
We interpret the rectangle $\wtau_{i,j}$ in the oriented torus $\mathcal T$ as the rectangle whose bottom left corner belongs to the $i$-th vertical circle.
So in the case where $\Rect(\x,\y)=\{r_1,r_2\}$ the two corresponding rectangles are $\wtau_{i,j}$ and $\wtau_{j,i}$ and remember that $\wtau_{i,j} = z \wtau_{j,i}$.
Let $r$ be the rectangle of $\Rect(\x,\y)$ corresponding to $\wr$.
A rectangle $\wr\in \Rect(\widetilde{\x},\widetilde{\y})$ is said to be empty if $r \in \EmptyRect(\x,\y)$.
The set of empty rectangles connecting $\widetilde{\x}$ to $\widetilde{\y}$ is denoted $\EmptyRect(\widetilde{\x},\widetilde{\y})$.

\begin{figure}[!ht]
  \begin{center}
	\input{deux-rectangles.pstex_t} 
  \end{center}
   \caption{\footnotesize \textbf{Rectangles.} Black dots represent $\x$ and white dots $\y$.
The two hatched regions correspond to rectangles $\wtau_{0,2} \in \Rect(\wx, \wx . \wtau_{0,2})$ and $\wtau_{2,0} \in \Rect(\wx ,  \wx . \wtau_{2,0})$.
The rectangle $\wtau_{0,2}$ is an empty rectangle while $\wtau_{2,0}$ is not.}
   \label{fig:deux-rectangles}
\end{figure}

We endow $\widetilde{C}^-(G)$ with a differential $\widetilde{\partial}^- $ given on elements of $\wss_n$ by:
$$
\widetilde{\partial}^- \wx = \sum_{\wy \in \wss_n}
\sum_{  \wr \in \EmptyRect (\widetilde{\x},\widetilde{\y})}
U^{O_{1}(\wr)}_{O_1} \ldots U^{O_{n}(\wr)}_{O_n} .\widetilde{\y}
$$
where $O_{k}(\wr)$ is the number of times $O_k$ appears in the interior of $r$.

\begin{proposition}
  The differential $\wdm $ drops the Maslov degree by one and respect the Alexander filtration.
\end{proposition}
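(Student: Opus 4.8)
The plan is to reduce this to the corresponding statement for the mod 2 complex $(C^-(G),\partial^-)$, which is contained in the Manolescu--Ozsv\'ath--Sarkar theorem already quoted. The key observation is that, as a $\Lambda$-module, $\widetilde C^-(G)$ is freely generated by (lifts of) the elements of $\ss_n$, and that under the identification $\widetilde C^-(G)\cong\Lambda[\ss_n]$ the differential $\widetilde\partial^-$ is carried to $\partial^-$ up to signs. More precisely, for each generator $\wx\in\wss_n$ and each $\wy=\wx\cdot\wtau_{i,j}$, the set $\EmptyRect(\wx,\wy)$ has at most one element, and when it is nonempty the unique rectangle $\wr=\wtau_{i,j}$ has the same underlying rectangle $r$ in $\TT$ as one of the (at most two) rectangles in $\EmptyRect(\x,\y)$. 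In particular the monomial $U_{O_1}^{O_1(\wr)}\cdots U_{O_n}^{O_n(\wr)}$ appearing in $\widetilde\partial^-\wx$ equals the monomial $U_{O_1}^{O_1(r)}\cdots U_{O_n}^{O_n(r)}$ appearing in $\partial^-\x$ for the corresponding term. Since $M$ and $A$ on $\wss_n$ are defined to be exactly the pullbacks $M(\wx)=M(\x)$ and $A(\wx)=A(\x)$, and since multiplication by $U_{O_k}$ drops $M$ by $2$ and drops the $k$-th Alexander coordinate by $1$ (and the others by $0$), every numerical statement we must check is literally the statement already known for $(C^-(G),\partial^-)$.

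First I would make the module identification precise: choose for each $\sigma\in\ss_n$ a lift $\widehat\sigma\in\wss_n$ with $p(\widehat\sigma)=\sigma$ (say, a fixed reduced-word lift), so that $\{\widehat\sigma\}_{\sigma\in\ss_n}$ is a $\Lambda$-basis of $\widetilde C^-(G)=\Lambda[\wss_n]/\langle z+1\rangle$, because in the quotient $z\cdot\wx=-\wx$ and the two lifts $\wx,z\wx$ of a given $\sigma$ become negatives of one another. Next I would record the compatibility of the rectangle sets: by the paper's own interpretation of $\wtau_{i,j}$ as the rectangle in $\TT$ whose bottom-left corner lies on the $i$-th vertical circle, the map $\wr\mapsto r$ sends $\EmptyRect(\wx,\wy)$ bijectively onto (a subset of) $\EmptyRect(\x,\y)$, and $O_k(\wr)=O_k(r)$ by definition. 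Then I would compute: if $U^{O_1(\wr)}_{O_1}\cdots U^{O_n(\wr)}_{O_n}\cdot\wy$ is a term of $\widetilde\partial^-\wx$, then on the one hand its Maslov degree is $M(\wy)-2\sum_k O_k(\wr)=M(\y)-2\sum_k O_k(r)$, and on the other hand the analogous term $U^{O_1(r)}_{O_1}\cdots U^{O_n(r)}_{O_n}\cdot\y$ of $\partial^-\x$ has Maslov degree one less than $M(\x)=M(\wx)$ by the Manolescu--Ozsv\'ath--Sarkar theorem; hence $M(\wy)-2\sum_k O_k(\wr)=M(\wx)-1$, which is the claim for $M$. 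The Alexander filtration statement is identical: the term has $i$-th Alexander coordinate $A_i(\wy)-(\text{number of }O_k\in\Os_i\text{ appearing in }\wr)=A_i(\y)-(\text{number of }O_k\in\Os_i\text{ in }r)\le A_i(\x)=A_i(\wx)$, again by the quoted theorem, so $\widetilde\partial^-$ respects $A$.

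Alternatively, and perhaps more cleanly, I would avoid invoking the topological theorem and instead verify both statements directly from the combinatorial formulas for $M$ and $A$ together with the standard rectangle-counting identities. For a single empty rectangle $r\in\EmptyRect(\x,\y)$ connecting $\x$ to $\y=\x\cdot\tau_{i,j}$, one has the well-known local formulas $M(\x)-M(\y)=1-2\,O(r)$ and $A_k(\x)-A_k(\y)=O_k(r)-X_k(r)$ where $O(r)=\sum_k O_k(r)$; combined with $X_k(r)\ge 0$ these give exactly ``$\widetilde\partial^-$ drops $M$ by one'' and ``$\widetilde\partial^-$ does not increase $A$'' after multiplying the target $\wy$ by $U^{O_1(\wr)}_{O_1}\cdots U^{O_n(\wr)}_{O_n}$ and using $M(U_{O_k})=-2$, $A(U_{O_k})=-e_k$. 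I would cite \cite{MOS}, \cite{MOST} for these local identities rather than rederiving them.

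I do not expect a genuine obstacle here: the content is entirely bookkeeping, and the only point that requires a word of care is checking that the spin-group bookkeeping ($z=-1$, $\wtau_{j,i}=z\wtau_{i,j}$, the identification of $\wtau_{i,j}$ with a specific rectangle) does not disturb which rectangles are counted or which $U$-powers appear — and it does not, precisely because all of the modifications coming from $\wss_n$ live in the group $\{\pm1\}$ of scalars, which is invisible to $M$ and to $A$. So the mild ``hard part'' is simply to state cleanly that the forgetful map $\wr\mapsto r$ is degree- and filtration-transparent; once that is said, the proposition follows by quoting the corresponding property of $(C^-(G),\partial^-)$.
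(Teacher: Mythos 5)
Your argument is correct and takes the same approach as the paper: the paper's proof consists solely of the remark that the statement ``is a straightforward consequence of calculus done in \cite{MOST}'', and what you wrote is precisely the elaboration of that remark. You reduce to the mod-$2$ local identities from \cite{MOS}, \cite{MOST} by observing that $M$, $A$, and the $U$-exponents are blind to the $\pm 1$ signs coming from $\wss_n$, which is exactly the bookkeeping the paper elides.
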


\begin{proof}
  It is a straightforward consequence of calculus done in \cite{MOST}.
\end{proof}

\begin{proposition}
  The endomorphism $\wdm $ of $\widetilde{C}^- (G)$ is a differential, i.e.
$$\wdm \circ \wdm = 0$$
\end{proposition}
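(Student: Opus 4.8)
The plan is to follow the proof that $\partial^-\circ\partial^-=0$ given in \cite{MOST}, keeping track of the sign carried by the central generator $z$. First I would expand $\wdm\wdm\,\wx$: writing the intermediate element as $\wt$ and using that each of $\EmptyRect(\wx,\wt)$, $\EmptyRect(\wt,\wy)$ contains at most one rectangle, one obtains
$$
\wdm\wdm\,\wx=\sum U_{O_1}^{O_1(\wr_1)+O_1(\wr_2)}\cdots U_{O_n}^{O_n(\wr_1)+O_n(\wr_2)}\cdot \wx\cdot\wr_1\cdot\wr_2 ,
$$
the sum running over all pairs $\wr_1\in\EmptyRect(\wx,\wt)$, $\wr_2\in\EmptyRect(\wt,\wy)$. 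The $U$-monomial depends only on the juxtaposed domain $D=r_1+r_2$, not on the way it is cut into two empty rectangles; so I would regroup the terms according to $D$, reducing the statement to showing that, for each such $D$, the partial sum $\sum_{(\wr_1,\wr_2):\,r_1+r_2=D}\wx\cdot\wr_1\cdot\wr_2$ vanishes in $\wcm(G)$.

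Next I would invoke the combinatorial classification from \cite{MOST} of the domains $D$ that arise: (a) $D$ is a disjoint union of two rectangles with disjoint closures; (b) $\x\neq\y$ and $D$ is the union of two rectangles glued along part of their boundary, so that $\y.\x^{-1}$ is a $3$-cycle and $D$ is a hexagon; (c) $\x=\y$, in which case $D$ is a width-one vertical or horizontal annulus, necessarily passing through exactly one $O_i$. In cases (a) and (b) the domain $D$ admits exactly two decompositions into ordered pairs of empty rectangles; in case (c) it admits only one, but the vertical annulus through $O_i$ and the horizontal annulus through $O_i$ carry the same $U$-monomial $U_{O_i}$ and should be matched with one another. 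In every case the two terms to be matched carry the same $U$-monomial, so the problem reduces to showing that the two corresponding elements of $\wss_n$ differ by the central element $z$; as $z=-1$ in $\wcm(G)$, they then cancel.

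The sign computation I would carry out case by case, using the presentation of $\wss_n$ from Section~\ref{section:algebraic-preliminaries} together with the interpretation of $\wtau_{i,j}$ as the rectangle whose bottom left corner lies on the $i$-th vertical circle. In case (a), writing $\wr_1=\wtau_{i,j}$, $\wr_2=\wtau_{k,l}$, the two rectangles have disjoint column supports, hence $\{i,j\}\cap\{k,l\}=\emptyset$, the other decomposition is $(\wtau_{k,l},\wtau_{i,j})$, and relation \eqref{eq:anticommutation} gives $\wtau_{i,j}\wtau_{k,l}=z\,\wtau_{k,l}\wtau_{i,j}$. In case (c), for the vertical annulus through $O_i$ the complementary rectangle wraps vertically and keeps its left edge on the same vertical circle, so both rectangles are represented by the same $\wtau_{c,c+1}$ and $\wr_1\wr_2=\wtau_{c,c+1}^{2}=z$, whereas for the matched horizontal annulus the complementary rectangle wraps horizontally and its left edge moves, so the second rectangle is represented by $\wtau_{c',c}$ and $\wr_1\wr_2=\wtau_{c,c'}\wtau_{c',c}=z\,\wtau_{c,c'}^{2}=\wun$; the two contributions are therefore $z\wx$ and $\wx$. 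In case (b), inspecting the possible shapes of the hexagon $D$ one finds that, denoting by $m$ the column shared by the two rectangles of a decomposition and by $a,b$ the two remaining columns, the two decompositions read $(\wtau_{a,m},\wtau_{m,b})$ and $(\wtau_{m,b},\wtau_{a,b})$ — the point being that the last rectangle is an honest, non-wrapping rectangle running from the $a$-th to the $b$-th vertical circle, so it is represented by $\wtau_{a,b}$ rather than $\wtau_{b,a}$; then, using $\wtau_{a,b}=z\wtau_{b,a}$, the instance $\wtau_{m,b}\wtau_{b,a}\wtau_{m,b}=\wtau_{m,a}$ of \eqref{eq:conjugation}, and $\wtau_{m,b}^{-1}=z\wtau_{m,b}$ from \eqref{eq:z-and-tau}, one computes $\wtau_{m,b}\wtau_{a,b}=z\,\wtau_{a,m}\wtau_{m,b}$, so the two terms again differ by $z$.

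I expect the hexagon case (b) to be the main obstacle: one has to determine precisely, for each shape of the domain $D$, which of the two rectangles of $\Rect(\x,\y)$ — that is, $\wtau_{i,j}$ or $\wtau_{j,i}=z\,\wtau_{i,j}$ — occurs in each of its two decompositions, and check that the two composites differ by exactly one factor of $z$. This is a finite but somewhat delicate geometric check, and it is essentially the same computation as the one showing that $(\wcm(G),\wdm)$ determines a sign assignment in the sense of \cite{MOST} (compare Section~\ref{section:sign-assignment}). Granting it, together with the straightforward cases (a) and (c), every partial sum vanishes in $\wcm(G)$, and therefore $\wdm\circ\wdm=0$.
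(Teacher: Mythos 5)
Your proposal is correct and follows essentially the same route as the paper: group the terms of $\wdm\circ\wdm(\wx)$ by the composite domain, classify those domains as in \cite{MOST} (your (a)=the paper's Cases 1--3 via relation \eqref{eq:anticommutation}, your (b)=Case 4 via \eqref{eq:conjugation}, your (c)=Case 5 pairing the width-one vertical annulus with the height-one horizontal annulus), and check in each case that the two contributions differ by the central element $z=-1$. The sign computations you carry out agree with the paper's (in particular your annulus analysis $\wtau_{c,c+1}^2=z$ versus $\wtau_{c,c'}\wtau_{c',c}=\wun$ is exactly the paper's $\wx.\wtau_i.\wtau_i$ versus $\wx.\wtau_{l,k}.\wtau_{k,l}=\wx$), so there is no gap.
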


\begin{proof}
Let $\wx = s(\x) \in \wss_n$, viewed as a generator of $\widetilde{C}^-  (G)$.
Then
$$
\wdm\circ\wdm(\wx) =
\sum_{(\wy,\wz)\in \wss_n}
\sum_{\wr_2 \in \EmptyRect(\wy,\wz)}
\sum_{\wr_1 \in \EmptyRect(\wx,\wy)}
U_{O_1}^{O_1(\wr_1)+O_1(\wr_2)} \ldots U_{O_n}^{O_n(\wr_1)+O_n(\wr_2)}. \wz
$$

\begin{figure}[!ht]
  \begin{center}
	\input{differentielle.pstex_t} 
  \end{center}
   \caption{\footnotesize \textbf{$\wdm \circ \wdm = 0$.} }
   \label{fig:differentielle}
\end{figure}

There are different cases which are illustrated by figure \ref{fig:differentielle}.
\paragraph{\textbf{Cases 1,2,3.}}
The rectangles corresponding to $\wtau_{i,j}$ and $\wtau_{k,l}$ give the elements $\wz_1 =  \wx .\wtau_{k,l}.\wtau_{i,j} $ and $\wz_2 = \wx . \wtau_{i,j}.\wtau_{k,l}$.
By equation \eqref{eq:anticommutation} contribution to $\wdm \circ \wdm (\wx)$ is null.

\paragraph{\textbf{Case 4.}}
Supports of the rectangles have a common edge.
The two corresponding elements are $\wz_1 =  \wx .\wtau_{i,j}.\wtau_{j,k}$ and $\wz_2 =  \wx . \wtau_{i,k}.\wtau_{i,j}$ with $i<j<k$.
By equation \eqref{eq:conjugation}, $\wz_1 = z \wz_2$ and so the contribution is null.
Other cases work in a similar way.

\paragraph{\textbf{Case 5.}}
The vertical annulus is of width 1 and corresponds to $\wz_1 =  U_{O_m}.\wx . \wtau_{i}.\wtau_{i}$ (it is a consequence of the condition on rectangles to be empty).

To this vertical annulus corresponds the horizontal annulus of height 1 which contains $O_m$.
This horizontal annulus contributes for $U_{O_m}. \wx . \wtau_{l,k} . \wtau_{k,l} = U_{O_m}.\wx$ for a pair $k<l \in \{0,\ldots,n-1\}$.
So, the contribution of each vertical annulus is canceled by the corresponding horizontal annulus.
The global contribution to $\wdm \circ \wdm (\wx)$ is null.
\end{proof}
\section{Properties of the chain complex}\label{section:properties}
\begin{proposition}\label{proposition:mod-two-reduction}
The tensor product of the filtered chain complex $\widetilde{C}^- (G)$ with $\zzz/2\zzz$ over $\zzz$ is isomorphic to the filtered chain complex $C^- (G)$ with coefficients in $\zzz/2 \zzz$.
In particular
$$H_* (\widetilde{C}^- (G)  \otimes \zzz/2\zzz ) \cong H_* (C^- (G) ; \zzz/2\zzz )$$
\end{proposition}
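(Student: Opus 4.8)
The plan is to exhibit an explicit $\Lambda/2\Lambda$-module isomorphism and check it intertwines the differentials. Recall that, as a $\Lambda$-module, $\widetilde{C}^-(G)$ is free with basis $\ss_n$: concretely, choosing the section $s:\ss_n\to\wss_n$ that picks out, for each $\x$, one of the two lifts $\{\wx, z\wx\}$, every element of $\widetilde{C}^-(G)$ is uniquely $\sum_{\x}\lambda_\x\, s(\x)$ because in the quotient by $\langle z+1\rangle$ we have $z\,s(\x)=-s(\x)$. Tensoring with $\zzz/2\zzz$ kills the distinction between $s(\x)$ and $z\,s(\x)$ entirely (since $-1=1$), so we get a canonical $\Lambda/2\Lambda$-module isomorphism $\Phi:\widetilde{C}^-(G)\otimes\zzz/2\zzz \xrightarrow{\ \sim\ } C^-(G)\otimes\zzz/2\zzz$ sending the class of any lift $\wx$ of $\x$ to $\x$. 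The first step is to write this down carefully and observe it is well-defined independent of the choice of lift.

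Next I would check compatibility with the gradings and filtrations: by the definitions in Section~\ref{section:definition-complex} we set $M(\wx)=M(\x)$ and $A(\wx)=A(\x)$, and $\Phi$ is $\Lambda$-linear, so it manifestly preserves the Maslov grading and the Alexander filtration. That part is immediate.

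The real content is that $\Phi$ is a chain map, i.e. $\Phi\circ\wdm = \partial^-\circ\Phi$ after reducing mod $2$. Fix a generator $\wx$. Unwinding the definition, $\wdm\wx = \sum_{\wy}\sum_{\wr\in\EmptyRect(\wx,\wy)} U^{O_1(\wr)}_{O_1}\cdots U^{O_n(\wr)}_{O_n}\,\wy$, where the rectangles connecting $\wx$ to $\wy$ are governed by: $\EmptyRect(\wx,\wy)$ is nonempty iff $\wy=\wx\,\wtau_{i,j}$ for some $i\neq j$ with the corresponding planar rectangle $r$ empty. I would match terms with $\partial^-\x = \sum_{\y}\sum_{r\in\EmptyRect(\x,\y)} U^{O_1(r)}_{O_1}\cdots\,\y$. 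For a fixed pair $\x,\y$ with $\y=\x\cdot\tau$ a transposition and $\EmptyRect(\x,\y)=\{r\}$ a single empty rectangle (out of the two rectangles in $\Rect(\x,\y)$), the interpretation fixed in Section~\ref{section:definition-complex} — that $\wtau_{i,j}$ is the rectangle whose bottom-left corner lies on the $i$-th vertical circle, and $\wtau_{i,j}=z\wtau_{j,i}$ — shows that exactly one of $\wx\wtau_{i,j}$, $\wx\wtau_{j,i}$ is an empty rectangle in $\EmptyRect(\wx,\cdot)$, and it lands on a single lift of $\y$ carrying the same monomial $U^{O_1(r)}_{O_1}\cdots U^{O_n(r)}_{O_n}$. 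Applying $\Phi$ (which forgets the lift and the sign) sends this to the matching term of $\partial^-\x$. Summing over all $\y$ gives $\Phi(\wdm\wx)=\partial^-(\Phi\wx)=\partial^-\x$ in $C^-(G)\otimes\zzz/2\zzz$.

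The main obstacle — really the only subtlety — is the bookkeeping in this last step: one must be sure that the geometric identification of the abstract generators $\wtau_{i,j}$ of $\wss_n$ with actual rectangles in $\Torus$ is consistent, so that "$\wr$ empty" on the left corresponds term-by-term to "$r$ empty" on the right, and that no lift gets counted twice or with a spurious $U$-power. Once the mod-$2$ reduction is taken, the signs $\S(r)$ implicit in the spin-twisted product all collapse to $1$, so there is genuinely nothing left to track beyond the combinatorics of empty rectangles, which is identical on both sides by construction. The displayed homology isomorphism $H_*(\widetilde{C}^-(G)\otimes\zzz/2\zzz)\cong H_*(C^-(G);\zzz/2\zzz)$ then follows formally, since an isomorphism of filtered chain complexes induces an isomorphism on (filtered) homology.
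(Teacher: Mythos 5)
Your proposal is correct and fills in exactly the argument that the paper compresses into the single sentence ``It is a consequence of the construction.'' The only cosmetic caveat: in the chain-map step you phrase things as if $\EmptyRect(\x,\y)$ is always a singleton and hence ``exactly one'' of $\wtau_{i,j},\wtau_{j,i}$ is empty; in fact both rectangles in $\Rect(\x,\y)$ can be empty (e.g.\ for small $n$), in which case both lifts $\wx\wtau_{i,j}$ and $\wx\wtau_{j,i}$ receive nonzero terms and both are sent by $\Phi$ to the same $\y$ — which is still the correct bookkeeping, so the conclusion is unaffected.
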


\begin{proof}
It is a consequence of the construction of the complex $\widetilde{C}^- (G)$.
\end{proof}

\begin{lemma}\label{lemma:homotopic-multiplication}
  Suppose that $O_i$ and $O_j$ belongs to the same component of $\overrightarrow L$.
Then multiplication by $U_{O_i}$ is filtered chain homotopic to multiplication by $U_{O_j}$.
\end{lemma}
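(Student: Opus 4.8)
The statement is the $\zzz$-coefficient analogue of the standard fact in combinatorial (link) Floer theory that $U_{O_i}$ and $U_{O_j}$ act by chain-homotopic maps when $O_i,O_j$ lie on the same link component; the strategy is to mimic the proof from \cite{MOST} (and ultimately \cite{MOS}), being careful to track the signs introduced by the spin extension. First I would reduce to the case where $O_i$ and $X_j$, or $O_i$ and $O_j$, lie in the same row or column, i.e. to a single ``elementary'' step, since $O_i$ and $O_j$ being on the same component means they are joined by an alternating sequence of horizontal and vertical segments of the link; composing the homotopies for consecutive $O$'s along that path gives the general statement. So it suffices to produce, for $O_i$ and $O_j$ adjacent along the link (say connected by a horizontal segment, so they share a row), a filtered $\Lambda$-module map $\widetilde H\colon \widetilde C^-(G)\to\widetilde C^-(G)$ with $\widetilde\partial^-\circ\widetilde H + \widetilde H\circ\widetilde\partial^- = U_{O_i}-U_{O_j}$.

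The map $\widetilde H$ is defined, as in \cite{MOST}, by counting empty rectangles that contain $X$ (the $X$ in the shared row, which sits between $O_i$ and $O_j$): for $\wx\in\wss_n$ set
$$
\widetilde H(\wx) = \sum_{\wy\in\wss_n}\;\sum_{\substack{\wr\in\EmptyRect(\wx,\wy)\\ X\in\interior(r)}} U_{O_1}^{O_1(\wr)}\cdots U_{O_n}^{O_n(\wr)}\cdot\wy.
$$
One checks first that $\widetilde H$ respects the Alexander filtration and has the correct Maslov shift, which follows from the corresponding computations in \cite{MOST} exactly as in the proof that $\wdm$ is filtered. The heart of the argument is the identity $\wdm\widetilde H + \widetilde H\wdm = U_{O_i} - U_{O_j}$: one expands both compositions as sums over pairs of empty rectangles (one of which contains $X$), and classifies the ways two such rectangles overlap. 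Most configurations cancel in pairs — a domain decomposes as a juxtaposition of two rectangles in exactly two ways — and these cancellations are governed precisely by relations \eqref{eq:anticommutation} and \eqref{eq:conjugation} of $\wss_n$, just as in the proof that $\wdm\circ\wdm=0$ above. The configurations that do \emph{not} cancel are the thin annuli: the width-one vertical annulus through $X$ contributes $U_{O_i}$ or $U_{O_j}$ depending on which side of $X$ it lies, and the height-one horizontal annulus (which is the row containing $X$, $O_i$, $O_j$) contributes the other one; the leftover terms assemble to $U_{O_i}\cdot\wx - U_{O_j}\cdot\wx$.

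The main obstacle is bookkeeping the signs: in \cite{MOST} the homotopy identity holds on the nose because the sign assignment $\S$ was built to make it so, whereas here the signs are not chosen but \emph{forced} by the group law of $\wss_n$, so I must verify that the two decompositions of each cancelling domain really produce elements of $\wss_n$ differing by the central element $z$ (hence opposite in $\widetilde C^-(G) = \Lambda[\wss_n]/\langle z+1\rangle$), and that in the annulus terms the signs work out to give $U_{O_i}-U_{O_j}$ rather than $U_{O_i}+U_{O_j}$ or $-(U_{O_i}-U_{O_j})$. Concretely, for a cancelling pair the two composite rectangles are $\wr_1\wr_2$-type products $\wx\cdot\wtau_{a,b}\cdot\wtau_{c,d}$ versus $\wx\cdot\wtau_{c',d'}\cdot\wtau_{a',b'}$, and I would invoke \eqref{eq:anticommutation} when the supports are disjoint and \eqref{eq:conjugation} when they share an edge — exactly the case analysis of Figure \ref{fig:differentielle} — to see the two contributions differ by $z$. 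For the annuli, the relation $\wtau_{i,j}\wtau_{i,j} = z$ combined with $\wtau_{i,j} = z\wtau_{j,i}$ pins down the relative sign of the vertical-annulus term and the horizontal-annulus term, and a direct check in a small model grid (or a parity count using the signature morphism $\varepsilon$, via Lemma \ref{lemma:conjugation}) confirms the total is $U_{O_i}-U_{O_j}$. Once the single elementary step is established, concatenating along the link component and using that a composition of filtered chain homotopies is a filtered chain homotopy finishes the proof.
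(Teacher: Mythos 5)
Your approach matches the paper's, which simply defers to Proposition 2.9 of \cite{MOST}: define a homotopy counting empty rectangles through a fixed $X$, reduce to consecutive $O$'s along the link component, and observe that relations \eqref{eq:anticommutation}--\eqref{eq:conjugation} force cancellation of everything but the two thin annuli, whose $z$-twist produces $U_{O_i}-U_{O_j}$. One minor slip worth fixing: two $O$'s never share a row or column (each contains exactly one $O$), so ``adjacent'' means there is an $X$ in the same row as one $O$ and the same column as the other, and that $X$ is the one $\widetilde H$ counts through --- this does not alter the substance of your argument.
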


\begin{proof}
The proof is the same as in Proposition 2.9 \cite{MOST}.
\end{proof}

\begin{theorem}\label{theo:filtered-quasi-iso-type}
  Let $\overrightarrow{L}$ be an oriented link with $l$ components.
Number the set $\Os=\{O_i\}_{i=1}^n$ such that $O_1,\ldots,O_l$ correspond to the different components of $\overrightarrow{L}$.
Then, the filtered quasi-isomorphism type of $(\widetilde{C}^- (G),\wdm)$ viewed over $\zzz[U_{O_1},\ldots,U_{O_l}]$ is an invariant of the link.
\end{theorem}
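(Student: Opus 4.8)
The plan is to mimic the proof of the invariance theorem of \cite{MOST}, replacing the explicit sign assignment $\S$ used there by the bookkeeping already built into the spin extension $\wss_n$. By Cromwell's theorem two grid diagrams present the same oriented link if and only if they differ by a finite sequence of commutation and (de)stabilization moves, so it suffices to construct, for each such move relating $G$ to a grid diagram $G'$, a filtered $U$-equivariant quasi-isomorphism between $(\wcm(G),\wdm)$ and $(\wcm(G'),\wdm)$ over $\Lambda$; composing these and then identifying, via Lemma~\ref{lemma:homotopic-multiplication}, the variables $U_{O_i}$ attached to $O$'s on a common component of $\overrightarrow L$ gives the asserted invariance over $\zzz[U_{O_1},\dots,U_{O_l}]$.

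For a commutation move I would, as in \cite{MOST}, draw the two relevant families of vertical circles on $\mathcal T$ at once and define a pentagon-counting map $P\colon\wcm(G)\to\wcm(G')$: after the natural identification of the generating sets of the two complexes with $\wss_n$, the coefficient of an empty pentagon from $\wx$ to $\wy$ is a monomial in the $U_{O_k}$ weighted by a sign, the sign being pinned down by requiring it to extend, coherently, the assignment implicit in $\wdm$ for empty rectangles. The real content is then the coherence bookkeeping. One must check that $P$ is a filtered chain map by decomposing each juxtaposed domain — an empty rectangle followed by an empty pentagon, a pentagon followed by a rectangle, and the degenerate annular configurations — in its two possible ways and verifying that the two contributions cancel in $\wcm(G')$; and that $P$ composed with the analogous map in the reverse direction is filtered chain homotopic to the identity through a hexagon-counting homotopy. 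Just as in Section~\ref{section:definition-complex}, each such cancellation reduces to one of the defining relations \eqref{eq:z-and-tau}, \eqref{eq:anticommutation}, \eqref{eq:conjugation} of $\wss_n$ (for the terms not involving an annulus one recovers verbatim the case analysis used to prove $\wdm\circ\wdm=0$), the underlying point being that the $2$-cocycle implicit in $\wss_n$ restricts to a valid sign assignment on pentagons and hexagons in the sense of \cite{MOST}. Being a filtered chain homotopy equivalence, $P$ is in particular a filtered quasi-isomorphism.

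For a stabilization move, say with $G'$ of complexity $n+1$, I would reproduce the algebraic part of \cite{MOST}: partition the generators of $\wcm(G')$ according to whether they occupy the distinguished new intersection point, present $\wcm(G')$ as a mapping cone for the component of $\wdm$ that changes this coordinate, and run the same sequence of cancellation (Gaussian elimination) homotopies that reduces this cone, over $\Lambda$, to $\wcm(G)$ with one extra polynomial variable, which is then absorbed using Lemma~\ref{lemma:homotopic-multiplication}. Every elementary homotopy in this reduction is built out of empty-rectangle counts, so it lifts verbatim to $\wss_{n+1}\supset\wss_n$, and the relations of $\wss_n$ again guarantee that the signs stay coherent throughout.

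The main obstacle is the bookkeeping of the commutation step: one must verify that the spin-extension relations are exactly strong enough to turn the pentagon and hexagon maps of \cite{MOST} into honest filtered chain maps and homotopies over $\zzz$, i.e. that in every case the geometric domain decomposition matches the algebra of $\wss_n$ on the nose — this is the analogue, in our framework, of the delicate sign lemmas of \cite{MOST}. Alternatively, once the comparison of Section~\ref{section:sign-assignment} is in hand, identifying $(\wcm(G),\wdm)$ with $(C^-(G),\partial^-)$ over $\zzz$, the statement follows immediately from the invariance theorem of \cite{MOST}.
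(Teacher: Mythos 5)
The overall strategy matches the paper's: invoke Cromwell--Dynnikov to reduce to elementary moves, build a filtered chain homotopy equivalence for each, and invoke Lemma~\ref{lemma:homotopic-multiplication} to descend to $\zzz[U_{O_1},\dots,U_{O_l}]$. But two points in your sketch would fail as written.

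First, you drop cyclic permutation from the list of elementary moves. The paper treats it as a separate move alongside commutation and (de)stabilization, and the horizontal case is not a formal triviality: the isomorphism there is $\Phi(\wx) = (-1)^{\varepsilon(\sigma)\cdot\varepsilon(\x)}\,\wx\cdot\widetilde{\sigma}^{-1}$, with a sign depending on the parity of $\x$, precisely the twist needed so that Lemma~\ref{lemma:conjugation} turns it into a chain map. Second, and more substantively, the claim that ``each such cancellation reduces to one of the defining relations of $\wss_n$'' and that the spin-extension relations are ``exactly strong enough'' is false for the pentagon map. The paper's commutation map is $\Phi_{\beta\gamma}(\wx)=\sum (-1)^{M(\wx)}\varepsilon_{\beta\gamma}(\widetilde{p})\,U^{O(\widetilde{p})}\widetilde{\y}$, and the extra factor $(-1)^{M(\wx)}$ (equivalently $(-1)^{\varepsilon(\x)}$) is indispensable: in the configuration of Figure~\ref{fig:phi-morphisme-1} the two decompositions satisfy $\wr'\cdot\widetilde{p}' = \widetilde{p}\cdot\wr$ on the nose in $\wss_n$ with no relative power of $z$, so the spin-extension algebra alone would produce the same term twice with the same sign rather than a cancellation. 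What actually makes the terms cancel is the left/right pentagon sign $\varepsilon_{\beta\gamma}$ reconciled against the Maslov drop $M(\wx)=M(\wx\cdot\wr)+1$. That grading-dependent sign is a genuine extra ingredient, not something the $2$-cocycle of $\wss_n$ hands you for free, and it is exactly what you would have to discover to complete the argument.

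Your closing alternative --- establish the Section~\ref{section:sign-assignment} filtered isomorphism with $(C^-(G),\partial^-)$ over $\zzz$ first and then cite the invariance theorem of~\cite{MOST} --- is logically sound and does not circularly depend on Theorem~\ref{theo:filtered-quasi-iso-type}. The paper chooses instead to prove invariance directly and keep the construction self-contained, but your shortcut would also settle the statement.
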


\begin{proof}
Multiplication by $U_{O_i}$ is filtered homotopic to multiplication by $U_{O_j}$ if $O_i$ and $O_j$ belong to the same component of the link (lemma \ref{lemma:homotopic-multiplication}).
It allows us to take for each homology class a representative in the variables $U_{O_1},\ldots,U_{O_l}$.

Different grid diagrams can lead to the same link.
However, given a link $\overrightarrow{L}$ and two grid diagrams $G$ and $H$ of $\overrightarrow{L}$, one can obtain $H$ starting with $G$ by a finite sequence of elementary moves which are cyclic permutation, commutation and (de)-stabilization (\cite{cromwell}, \cite{dynnikov}).
We now prove that the filtered quasi-isomorphism type is unchanged after making any elementary move.

\paragraph{\textbf{Cyclic permutation.}}
Let $\widetilde{\sigma} = \wtau_0 .\wtau_1 . \ldots . \wtau_{n-2}$ be an element in $\wss_n$.
So $\sigma$ is the cyclic pertumation $(01\ldots(n-1) )$.
There are two cases:
\begin{enumerate}
  \item \textbf{Vertical cyclic permutation.}
Suppose that the grid is moved one step upper.
Denote $G$ the grid diagram before the move and $H$ after the move.
Define
$$\Phi:\widetilde{C}^- (G) \rightarrow \widetilde{C}^- (H)$$
given on generators by
$$\Phi(\wx) = \widetilde{\sigma}.\wx$$
The application $\Phi$ induces an application on rectangles $\Phi_{\Rect}:\Rect_G \rightarrow \Rect_H$ which is the identity i.e. if $\wr \in \Rect(\wx,\wy)$ then $\wr \in \Rect(\widetilde{\sigma}.\wx,\widetilde{\sigma}.\wy)$.
Then $\Phi$ is a filtered isomorphism of chain complexes:
$$
\begin{array}{rcl}
  \Phi \circ \wdm \wx & = & \Phi( \sum_{\wy \in \wss_n}
\sum_{  \wr \in \EmptyRect (\widetilde{\x},\widetilde{\y})}
U^{O_{1}(\wr)}_{O_1} \ldots U^{O_{n}(\wr)}_{O_n} .\wy  ) \\

& = & \sum_{\wy \in \wss_n}
\sum_{  \wr \in \EmptyRect (\widetilde{\x},\widetilde{\y})}
U^{O_{1}(\wr)}_{O_1} \ldots U^{O_{n}(\wr)}_{O_n} .\widetilde{\sigma}.\wy \\

& = & \sum_{\wy \in \wss_n}
\sum_{  \wr' \in \EmptyRect (\widetilde{\sigma}.\widetilde{\x},\widetilde{\sigma}.\widetilde{\y})}
U^{O_{1}(\wr')}_{O_1} \ldots U^{O_{n}(\wr')}_{O_n} .\widetilde{\sigma}.\wy \\

& = & \sum_{\wy' \in \wss_n}
\sum_{  \wr' \in \EmptyRect (\widetilde{\sigma}.\widetilde{\x},\widetilde{\y}')}
U^{O_{1}(\wr')}_{O_1} \ldots U^{O_{n}(\wr')}_{O_n} .\wy' \\

& = & \wdm \circ \Phi(\wx) \\
\end{array}
$$

  \item \textbf{Horizontal cyclic permutation.}
Let $G$ the initial diagram and $H$ the one obtained after horizontal cyclic permutation (we consider the case where the grid is moved left, the other case is similar).
Define $\Phi:\widetilde{C}^- (G) \rightarrow \widetilde{C}^- (H)$ given on generators by
$$
\Phi(\wx) = (-1)^{\varepsilon(\sigma) . \varepsilon(\x)} .\wx . \widetilde{\sigma}^{-1}
$$
where the signature maps to the group $\{0,1\}$.
\begin{lemma}
  $\Phi$ is a filtered isomorphism of chain complexes.
\end{lemma}

\begin{proof}
The map $\Phi$ induces on rectangles the map $\Phi_{\Rect}:\Rect_G \rightarrow \Rect_H$ given by
$$
\Phi_{\Rect}(\wtau_{i,j}) = (-1)^{\varepsilon(\sigma)} . \widetilde{\sigma}.\wtau_{\sigma(i),\sigma(j)} .\widetilde{\sigma}^{-1}
$$
(see lemma \ref{lemma:conjugation}).
Consider a summand in $\wdm \circ \Phi(\wx)$ (we forget the corresponding variables $U_k$):
$$
\begin{array}{rcl}
  (-1)^{\varepsilon(\sigma) . \varepsilon(\x)} . \wx.\widetilde{\sigma}^{-1}.\wtau_{i,j} & = & 
(-1)^{\varepsilon(\sigma) . \varepsilon(\x)} . \wx . \widetilde{\sigma}^{-1} . (-1)^{\varepsilon(\sigma)} . \widetilde{\sigma}.\wtau_{\sigma(i),\sigma(j)} .\widetilde{\sigma}^{-1} \\

& = & (-1)^{\varepsilon(\sigma) . (\varepsilon(\x)+1)} . \wx . \wtau_{\sigma(i),\sigma(j)} .\widetilde{\sigma}^{-1} \\

\end{array}
$$
The corresponding summand in $\Phi\circ \wdm \wx$ is
$$
(-1)^{ \varepsilon(\sigma) . \varepsilon(\x.\tau_{\sigma(i),\sigma(j)}) }  . \wx . \wtau_{\sigma(i),\sigma(j) } .\widetilde{\sigma}^{-1}
$$
Since $\varepsilon(\x.\tau_{\sigma(i),\sigma(j)}) = \varepsilon(\x)+1$, we obtain
$$ (-1)^{ \varepsilon(\sigma) . \varepsilon(\x.\tau_{\sigma(i),\sigma(j)}) }  . \wx . \wtau_{\sigma(i),\sigma(j) } .\widetilde{\sigma}^{-1} = (-1)^{\varepsilon(\sigma) . \varepsilon(\x)} . \wx.\widetilde{\sigma}^{-1}.\wtau_{i,j}$$
\end{proof}
\end{enumerate}

\paragraph{\textbf{Commutation.}}
There are two cases which are commutation of columns and commutation of rows.
We deal in detail the case of commutation of columns.

\begin{itemize}
  \item \textbf{Commutation of columns.}
Let $G$ be a grid presentation of $\overrightarrow{L}$ and $H$ the grid diagram obtained from $G$ after commutation.
Let $\beta$ be the vertical circle of $G$ and $\alpha$ be the one for $H$.
We represent the two diagrams $G$ and $H$ on the same torus (see figure \ref{fig:pentagone-gauche-droit}).

Now, we define a filtered chain morphism
$$\Phi_{\beta \gamma}:\widetilde{C}^- (G) \rightarrow \widetilde{C}^- (H)$$
by counting pentagons.

We recall the definition in \cite{MOST}.
Let $\x\in C^-(G)$ and $\y\in C^-(H)$ be two generators.
We denote $\Pent_{\beta \gamma}(\x,\y)$ the set of immersed pentagons connecting $\x$ to $\y$.
It is the empty set if $\x$ and $\y$ differ by more than two points.
An element $p\in\Pent_{\beta \gamma}(\x,\y)$ is an immersed disk in $\mathcal T$, whose boundary consists of 5 arcs, each of them is contained in a vertical or a horizontal circle.
Moreover, with the orientation induced by $\mathcal T$ of the boundary of $p$, we begin by the point in $\x$ on the $\beta$ circle, we go through a horizontal arc to a point in $\y$, we follow a vertical arc to the corresponding point of $\x$, we go through a horizontal arc to a point in $\y$ and we go back to the beginning by going first on $\gamma$ and then on $\beta$ through one intersection point between $\alpha$ and $\beta$.
Moreover, we require that each angle of the pentagon be acute (see figure \ref{fig:pentagone-gauche-droit}).
Let $\Pent^{\circ}_{\beta \gamma}(\x,\y)$ be the set of empty pentagons i.e. if $p\in \Pent^{\circ}_{\beta \gamma}(\x,\y)$ then $\x \cap Int(p)=\emptyset$.

Let $\wx \in \widetilde{C}^- (G)$ and $\wy \in \widetilde{C}^- (H)$ be two generators.
We consider $\wy$ and $\wy$ as elements of the same group $\wss_n$ where we identify the $\beta$ circle with the $\gamma$ circle.
The set of pentagons $\Pent_{\beta \gamma}(\wx,\wy)$ connecting $\wx$ to $\wy$ is the empty set if $\Pent_{\beta \gamma}(\x,\y)=\emptyset$.
Otherwise $\Pent_{\beta \gamma}(\wx,\wy)=\wtau_{i,j}$ where the bottom left corner of the pentagon is in the $i$-th circle and if $\wy= \wx .\wtau_{i,j}$.
A pentagon $\widetilde{p} \in \Pent_{\beta \gamma}(\wx,\wy)$ is said empty if $p \in \Pent^{\circ}_{\beta \gamma}(\x,\y)$.
Let $\Pent^{\circ}_{\beta \gamma}(\wx,\wy)$ be the set of empty pentagons connecting $\wx$ to $\wy$.

Define
$$
\varepsilon_{\beta \gamma}:\Pent^{\circ}_{\beta \gamma}(\wx,\wy)\rightarrow \{\pm 1 \}
$$
by $\varepsilon_{\beta \gamma}(\widetilde{p})=+1$ if $p$ is a left pentagon and $-1$ if it is a right pentagon.
$$
\Phi_{\beta \gamma}(\wx)=\sum_{\wy \in \wss_n} \sum_{\widetilde{p}\in \Pent^{\circ}_{\beta \gamma}(\wx,\wy)} (-1)^{ M(\wx) }\varepsilon_{\beta \gamma}(\widetilde{p}).U_{O_1}^{O_1 (\widetilde{p})}\dots U_{O_n}^{O_n (\widetilde{p})} . \wy
$$
where $O_m (\widetilde{p})$ is the number of times $O_m$ appears in the interior of $p$.

Define $\varepsilon_{ \gamma \beta}$ with the same convention: left pentagons takes value $+1$ and right pentagons takes value $-1$.

\begin{rembis}
  Instead of taking $(-1)^{ M(\wx) }$, one can take $(-1)^{\varepsilon(\x)}$ as in the proof of cyclic permutation.
In the following, we take the Maslov degree but we can think of it as the signature.
\end{rembis}

\begin{figure}[!ht]
  \begin{center}
	\input{pentagone-gauche-droit.pstex_t} 
  \end{center}
   \caption{\footnotesize \textbf{Pentagons.} We mark with black dots $\x$ and with white dots $\y$.
The pentagon $\wtau_{i,j} \in\Pent^{\circ}_{\beta \gamma}(\wx,\wx.\wtau_{i,j}) $ is a left pentagon and so $\varepsilon_{\beta \gamma}(\wtau_{i,j} )=+1$.
The pentagon $\wtau_{i,k}) \in \Pent^{\circ}_{\beta \gamma}(\wx,\wx. \wtau_{i,k} )$ is a right pentagon and so $\varepsilon_{\beta \gamma}( \wtau_{i,k} )=-1$.}
   \label{fig:pentagone-gauche-droit}
\end{figure}

\begin{lemma}
  $\Phi_{\beta \gamma}$ is a filtered chain morphism.
\end{lemma}

\begin{proof}
Summands appearing in $\wdm \circ \Phi_{\beta \gamma}(\wx)$ are of this type: $\varepsilon_{\beta \gamma}(\widetilde{p})  (-1)^{ M(\wx) } . \wx . \widetilde{p} . \wr $, and those in $\Phi_{\beta \gamma}\circ \wdm \wx$ are of this type: $\varepsilon_{\beta \gamma}(\widetilde{p}')  (-1)^{ M(\wx.\wr') } . \wx  . \wr' . \widetilde{p}'$.
Each term in $\wdm \circ \Phi_{\beta \gamma}(\wx)$ corresponds to a term in $\Phi_{\beta \gamma}( \wdm \wx)$.
In case of figure \ref{fig:phi-morphisme-1}, we have in fact $\wr' . \widetilde{p}' = \widetilde{p} . \wr $.
Nevertheless, there are a left pentagon (say $\widetilde{p}'$) and a right pentagon (say $\widetilde{p})$ so via $\varepsilon_{\beta \gamma}$ they take different values which are compensated by the Maslov degree $M(\wx) = M(\wx.\wr) +1$.

In other cases except the special case illustrated by figure \ref{fig:phi-morphisme-2}, the two pentagons are on the same side but $\widetilde{p} . \wr = z \wr' . \widetilde{p}'$ and it works similary.

There is a special case illustrated by figure \ref{fig:phi-morphisme-2} and which appears once in $\wdm \circ \Phi_{\beta \gamma}(\wx)$ and once in $\Phi_{\beta \gamma} \circ \wdm \wx$.
With the same discussion as above we check that signs behave well.

\begin{figure}[!ht]
  \begin{center}
	\input{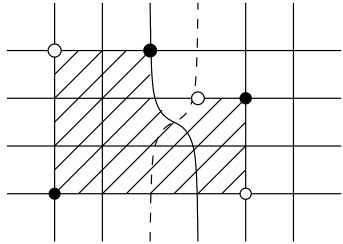} 
  \end{center}
   \caption{ \footnotesize \textbf{$\Phi_{\beta \gamma}$ filtered chain morphism.} The hatched domain is decomposed either as a left pentagon followed by a rectangle, or a rectangle followed by a right pentagon.
The first decomposition appears in $\wdm \circ \Phi_{\beta \gamma}$ and the second one appears in $\Phi_{\beta \gamma} \circ \wdm$. }
   \label{fig:phi-morphisme-1}
\end{figure}

\begin{figure}[!ht]
  \begin{center}
	\input{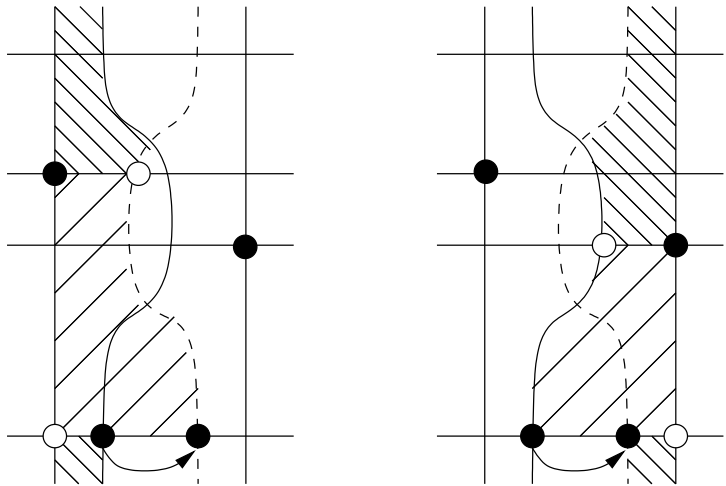} 
  \end{center}
   \caption{ \footnotesize \textbf{$\Phi_{\beta \gamma}$ filtered chain morphism: special case.} Generators marked by black dots differ only in one point.
On the left, we have a left pentagon followed by a rectangle, on the right we have a rectangle followed by a right pentagon.}
   \label{fig:phi-morphisme-2}
\end{figure}

\end{proof}

To prove that $\Phi_{\beta \gamma}$ is a filtered homotopy equivalence we define an application which raises the maslov degree by one and counts empty hexagons.

First, we recall the definition of hexagons in the sense of \cite{MOST}.
Let $\x,\y \in C^- (G)$ be two generators.
Denote $\Hex_{\beta \gamma \beta}(\x,\y)$ the set of immersed hexagons connecting $\x$ to $\y$.
It is the empty set if $\y.\x^{-1}$ is not a transposition.
An element $h \in \Hex_{\beta \gamma \beta}(\x,\y)$ is an immersed disk in $\mathcal T$ with the orientation induced by $\mathcal T$.

Following the oriented boundary of $h$, we begin with the point of $\x$ which is on $\beta$, we go through a horizontal arc to a point of $\y$, through a vertical arc to a point of $\x$, through a horizontal arc to the point of $\y$ in $\beta$ and finally go back to the initial point of $\x$ through the vertical arc $\beta$ and $\gamma$ and the two intersection points between $\beta$ and $\gamma$ (see figure \ref{fig:h-morphisme}).
We require that all angles of the hexagon be acute.
The set $\Hex_{\beta \gamma \beta}^{\circ}$ of empty pentagons is those for which the interior doesn't contain any point of the corresponding generators.

The set of hexagons $\Hex_{\beta \gamma \beta}(\wx,\wy)$ connecting $\wx$ to $\wy$ is the empty set if $\Hex_{\beta \gamma \beta}(\x,\y)= \emptyset$.
Otherwise $\Hex_{\beta \gamma}(\wx,\wy)=\wtau_{i,j}$ where the bottom left corner of the pentagon is in the $i$-th circle and if $\wy= \wx .\wtau_{i,j}$.
A hexagon $\widetilde{h} \in \Hex_{\beta \gamma \beta}(\wx,\wy)$ is said empty if $h \in \Hex^{\circ}_{\beta \gamma \beta}(\x,\y)$.
Let $\Hex^{\circ}_{\beta \gamma \beta}(\wx,\wy)$ be the set of empty hexagons connecting $\wx$ to $\wy$.

The morphism $H_{\beta \gamma \beta}:\widetilde{C}^- (G) \rightarrow \widetilde{C}^- (G)$ is given on generators by
$$
H_{\beta \gamma \beta}(\wx)=\sum_{\wy \in \wss_n} \sum_{\widetilde{h} \in \Hex^{\circ}_{\beta \gamma \beta}(\wx,\wy)} U_{O_1}^{O_1 (\widetilde{h})}\dots U_{O_n}^{O_n (\widetilde{h})} . \wy
$$
where $O_m (\widetilde{h})$ is the number of times $O_m$ appears in the interior of $h$.
In particular $H_{\beta \gamma \beta}$ raise the Maslov degree by 1.

\begin{lemma}
  The application $\Phi_{\beta \gamma}$ induces a filtered isomorphism in homology; more precisely
$$
\Phi_{\gamma \beta} \circ \Phi_{\beta \gamma}  + id = \wdm \circ H_{\beta \gamma \beta } + H_{\beta \gamma \beta } \circ \wdm
$$
$$
\Phi_{\beta \gamma} \circ \Phi_{\gamma \beta }  + id = \wdm \circ H_{\gamma \beta \gamma} + H_{\gamma \beta \gamma} \circ \wdm
$$
\end{lemma}

\begin{proof}
By juxtaposing two pentagons which appears in $\Phi_{\gamma \beta} \circ \Phi_{\beta \gamma}$, we get a domain which has a unique alternative decomposition as a rectangle followed by a hexagon or a hexagon followed by a rectangle, counted in $\wdm \circ H_{\beta \gamma \beta}$ or in $ H_{\beta \gamma \beta} \circ \wdm$ (it is the case where the width of the rectangle is $> 1$).

\begin{figure}[!ht]
  \begin{center}
	\input{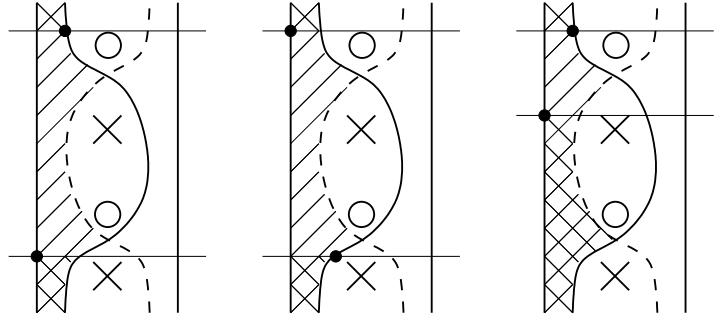}
  \end{center}
   \caption{ \footnotesize \textbf{Decomposing the identity map.} Consider the 3 configurations in $\widetilde{C}^- (G)$ marked by dark dots.
On the left, we have a hexagon followed by a rectangle: it appears in $\wdm \circ H_{\beta \gamma \beta}$.
In the middle, we have a rectangle followed by a hexagon: it appears in $ H_{\beta \gamma \beta} \circ \wdm$.
On the right, we have a pentagon followed by a pentagon: it appears in $\Phi_{\gamma \beta} \circ \Phi_{\beta \gamma}$.}
   \label{fig:h-morphisme}
\end{figure}

However, there is a domain which has a unique decomposition (see figure \ref{fig:h-morphisme}): it corresponds to a vertical annulus without any $X$.
It counts for $-id$ and admits a unique alternative decomposition.

When it is a pentagon followed by a pentagon, we start with $\wx$ and arrive to
$$
\varepsilon_{\beta \gamma}(\wp) .\varepsilon_{\gamma \beta}(\wp') . (-1)^{ M_G(\wx) + M_H(\wx.\wtau_i) } \wx. \wtau_i . \wtau_i = z \wx
$$
where $M_G$ (resp. $M_H$) is the Maslov degree on $G$ (resp. on $H$).
Since the two pentagons are on the same side, we have $\varepsilon_{\beta \gamma}(\wp) .\varepsilon_{\gamma \beta}(\wp') = +1$.
Moreover we have $M_G(\wx) = M_H( \wx.\wtau_i )$.

While starting with a hexagon and composing with a rectangle or starting with a rectangle and composing with a hexagon, we start with $\wx$ and arrive to $\wx .\wtau_i . \wtau_i = z\wx$.

In every case, starting with $\wx$ we get $z\wx$.
\end{proof}

\item \textbf{Commutation of rows.}
We deal the commutation of rows exactly the same way except that we must change the notions of \emph{left} and \emph{right} by \emph{top} and \emph{bottom}.
\end{itemize}

\paragraph{\textbf{Stabilisation.}} We follow the proof of invariance under stabilization done in \cite{MOST}, updating it to our context.
Let $G$ be the initial diagram of complexity $n$ and $H$ the one of complexity $n+1$ obtained after one stabilization move along the row containing $O_2$.
The set of $\Os$ in $G$ is numbered from $2$ to $n+1$.
We can suppose (after some cyclic permutations) that the new row and column are the $n+1$-th row and the $n+1$-th column.
We deal with the case drawn in figure \ref{fig:stabilisation-1}.

\begin{figure}[!ht]
  \begin{center}
	\input{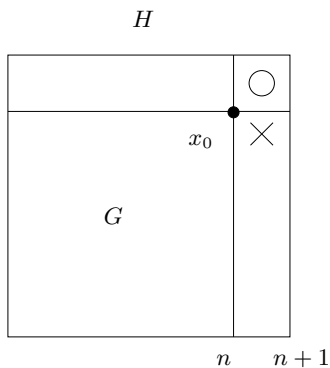}
  \end{center}
   \caption{ \footnotesize \textbf{Stabilisation.} The point $x_0$ is the point belonging to all elements of $\mathbf{I}$ when we consider the set of generators of $\widetilde{C}^- (G)$ as a subset of the set of generators of $\widetilde{C}^- (H)$.
The $O$ and the $X$ are numbered $O_1$ and $X_1$.}
   \label{fig:stabilisation-1}
\end{figure}

Let $B =\widetilde{C}^- (G)$ and $C = \widetilde{C}^- (H)$ the corresponding complexes.
First, we recall that $C'$ is the mapping cone of
$$
U_{O_1} - U_{O_2} : B[U_{O_1}] \rightarrow B[U_{O_1}]
$$
and the differential is given by $\partial' (a,b) = (\wdm a , (U_{O_1} - U_{O_2}). a-\wdm b )$.
Let $\mathcal L$ et $\mathcal R$ be the submodules of $C$ such that $C' = B[U_{O_1}] \bigoplus B[U_{O_1}] = \mathcal L \bigoplus \mathcal R$.
The module $\mathcal R$ heritates a Maslov degree and an Alexander filtration which is the one of $B[U_{O_1}]$ whereas $\mathcal L$ heritates the ones of $B[U_{O_1}]$ dropped by 1.

\begin{lemma}
  $C'$ is filtered quasi-isomorphic to $B$.
\end{lemma}

\begin{proof}
The proof is the same as in lemma 3.3 \cite{MOST}.
\end{proof}

It remains to prove that $C'$ is filtered quasi-isomorphic to $C$.
We define an injection which maps each generators of $B$ to a subset $\mathbf I$ of the set of generators of $C$.
It is $\phi:\wss_n \hookrightarrow \wss_{n+1}$ which maps $\wtau_i$ to $\wtau_i$ for all $i\in\{0,\ldots,n-2\}$.
We extend it on all elements of $\wss_n$ by making it compatible with the normal form previously described in section \ref{section:algebraic-preliminaries}.
In particular, if $\wy\in \mathbf{I}$ then $\x_0$ is a point of $\y$.
We have the following equalities for $\wx \in B$:
$$
M_B(\wx) = M_C(\phi(\wx))+1 = M_{C'}(0,\phi(\wx) ) = M_{C'}(\phi(\wx),0)+1
$$
$$
A_B(\wx) = A_C(\phi(\wx))+A(U_1) = A_{C'}(0,\phi(\wx) ) = A_{C'}(\phi(\wx),0)+A(U_1)
$$

We recall the definition of domains of type $L$ and $R$ (see \cite{MOST}).
Let $\x,\y$ be two generators in $\ss_n$.
A path from $\x$ to $\y$ is an oriented closed path $\gamma$ made of arcs in the circles $\alphas$ and $\betas$ which angles are points in $\x\cup \y$, oriented so that $\partial (\gamma \cap \alpha ) =\x-\y$.
A domain $p$ connecting $\x$ to $\y$ is a two chain in $\mathcal T$ whose boundary $\partial p$ is a path from $\x$ to $\y$ and denote $\pi(\x,\y)$ the set of domains connecting $\x$ to $\y$.

\begin{definition}
Let $\x\in \ss_n$ and $\y\in \ss_n \subset \ss_{n+1}$ (the inclusion is given by $\phi$).
A domain $p\in \pi(\x,\y)$ is said to have type $L$ (resp. type $R$) if it is trivial (in this case it has type $L$) or if it satisfies the following conditions:
\begin{itemize}
  \item $p$ has no negative local multiplicity,
\item for each $c\in \x\cup\y$ other than $x_0$, at least three of the four adjoining squares have local multiplicity equal to zero,
\item in a neighboorhood of $x_0$ the local multiplicity in three of the adjoining squares are $k$.
When $p$ has type $L$, the lower left corner has local multiplicity $k-1$, while for $p$ of type $R$, the lower right corner has multiplicity $k+1$.
\item $\partial p$ is connected.
\end{itemize}
The complexity of the trivial domain is 1 and for any other domains it is the number of horizontal arcs of its boundary.
The set of domains of type $L$ (resp. $R$) is denoted $\pi^{L}(\x,\y)$ (resp. $\pi^{R}(\x,\y)$).
We denote $\pi^{F}(\x,\y)= \pi^{L}(\x,\y) \cup \pi^{R}(\x,\y)$ and called its elements of type $F$.
\end{definition}

We wish to define a map $F:C \rightarrow C'$ which maps generators of $C$ to generators of $B$ through domains of type $R$ or $L$.
To do this, we define type $F$ domains $\pi^{F}(\wx,\wy)$ for $\wx \in C$ and $\wy \in \mathbf{I}$.

To a domain $p\in \pi^{F}(\x,\y)$ of complexity $m$ we associate a standard decomposition in a finite sequence of rectangles.
Since $\partial p$ is a connected oriented curve, we number the $\betas$-circles by $\{v_i\}_{i=1}^m$ in such a way that they inherits the cyclic order given by $\partial p$, and such that $v_m$ does not contain $x_0$.
We decompose $p$ in a sequence of rectangles $\{r_i\}_{i=1}^{m-1}$ so that $r_i$ is the rectangle between the vertical circles $v_i$ and $v_m$ (see figure \ref{fig:decomposition-polygone}).

We define $\pi^{F}(\wx,\wy)$ as the empty set if $\pi^{F}(\x,\y)=\emptyset$ , otherwise $\wp \in \pi^{F}(\wx,\wy)$ is
$$
\wr_1 . \ldots .\wr_{m-1}
$$
corresponding to the rectangles $\{r_i\}_{i=1}^{m-1}$ if $\y = \x . r_1 . \ldots .r_{m-1}$.

Define on the set of generators of $C$ the two applications $F^L : C\rightarrow \mathcal L$ and $F^R : C\rightarrow \mathcal R$ by
$$
F^L (\wx) = \sum_{ \wy\in \mathbf{I} } \sum_{ \wp \in \pi^{L}(\wx,\wy) } U_{O_2}^{O_2 (\wp)}\ldots U_{O_n}^{O_n (\wp)} . \wy
$$

$$
F^R (\wx) = \sum_{ \wy\in \mathbf{I} } \sum_{ \wp \in \pi^{R}(\wx,\wy) } U_{O_2}^{O_2 (\wp)}\ldots U_{O_n}^{O_n (\wp)} . \wy
$$
Putting those two applications together we get
$$
F = \left( \begin{array}{c}
  F^L \\
F^R\\
\end{array}\right)
:C\rightarrow C'
$$

Before proving that $F$ is a filtered chain morphism, we need to deal with the differential in $C'$.
The differential in $C'$ counts rectangles which are either empty (type 1) or empty except of the point $x_0$ (type 2).
For type 2 rectangles the variable $U_1$ doesn't appear in the differential.
Let $\wr$ be a rectangle in $B$.
Viewed as a rectangle in $C'$, if it is of type 1 we note $\wr'$ the corresponding rectangle (under the injection $\phi$) and its standard decomposition is $D_0(\wr') =\wr'$.
If it is of type 2, it corresponds to $\wr'$ and admits a unique decomposition shown in figure \ref{fig:decomposition-polygone} in three rectangles. $D_0 (\wr') = \wr_1 ' . \wr_2 ' . \wr_3 '$ is its standard decomposition.

\begin{figure}[!ht]
  \begin{center}
	\input{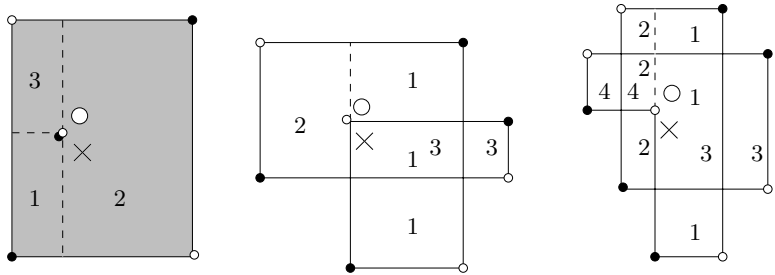}
  \end{center}
   \caption{ \footnotesize \textbf{Decomposing polygons.} On the left, we have the standard decomposition of a rectangle of type 2 as $\wr_1  . \wr_2  . \wr_3 $.
In the middle, we have the standard decomposition of a polygonal domain of type $R$ of complexity 4 as $\wr_1  . \wr_2  . \wr_3 $.
On the right, we have the standard decomposition of a polygonal domain of type $L$ of complexity 5 as $\wr_1  . \wr_2  . \wr_3 . \wr_4$.
Some regions are in the supports of more than one rectangle and are labeled with the number of each corresponding rectangles. }
   \label{fig:decomposition-polygone}
\end{figure}

\begin{lemma}
  Let $\wx,\wy$ be in $B$ and $\wx',\wy' \in \mathbf{I}$ their image in $C$.
Suppose there exists $\wr'\in \Rect(\wx',\wy')$ a rectangle of type 1 or 2.
Then the corresponding rectangle $\wr \in \Rect^{\circ}(\wx,\wy)$ satisfies:
\begin{enumerate}
  \item $\wr =\wr'$ if $\wr'$ has type 1.
  \item $\wr = \wr_1 ' . \wr_2 ' . \wr_3 '$ if $\wr'$ has type 2 and $D_0 (\wr') = \wr_1 ' . \wr_2 ' . \wr_3 '$.
\end{enumerate}
\end{lemma}

\begin{proof}
For rectangles of type 1, there is nothing to prove since $\x_0$ has coordonates $(n,n)$.

For rectangles of type 2, we must prove that for all $0\leq i,j < n-1$
$$
\wtau_{i,j} = \wtau_{i,n} . \wtau_{n,j} . \wtau_{i,n}
$$
but this is equation \eqref{eq:conjugation}.
\end{proof}

\begin{lemma}\label{lemma:F-morphism}
The map $F:C\rightarrow C'$ is a filtered chain morphism which respects the Maslov degree and preserves the Alexander filtration.
\end{lemma}

\begin{proof}
The proof is by cutting in two different ways domains appearing in $F\circ \wdm_C$ and $\wdm_{C'} \circ F$.

\begin{definition}
Let $\wp \in \pi^{F}(\wx,\wy)$ be a domain decomposed as $\wp = \wr_1 . \ldots .\wr_{n}$ with $\wr_i \in \Rect^{\circ}$ for all $i$.
If for some $i$ we have $r_i * r_{i+1} = r'_i *r'_{i+1}$ with $r'_i ,r'_{i+1}\in \Rect^{\circ}$ distinct from $r_i$ and $r_{i+1}$ then we say that the two decompositions of the domain $\wr_1 . \ldots  .\wr_i . \wr_{i+1} . \ldots .\wr_{n}$ and $\wr_1 . \ldots  .\wr'_i . \wr'_{i+1}.\ldots  .\wr_{n}$ differ by an elementary move (here $*$ is the juxtaposition of rectangle).
In particular $\wr_1 . \ldots  .\wr_i . \wr_{i+1} . \ldots .\wr_{n} = z \wr_1 . \ldots  .\wr'_i . \wr'_{i+1}.\ldots  .\wr_{n}$
\end{definition}

With this definition, the proof of the lemma \ref{lemma:F-morphism} is a rewriting of the proof of lemma 3.5 \cite{MOST}.
\end{proof}

\begin{proposition}\label{proposition:F-quasi-iso}
  $F$ is a filtered quasi-isomorphism.
\end{proposition}

\begin{proof}
The proof is in four steps which we recall here:
\begin{enumerate}
  \item Define a filtration $\mathcal F$ on the graded object associated to the Alexander filtration $A$ on complexes $\Caa$ and $\Caa'$ obtained from $C$ and $C'$.
  \item Describing the homology of the graded object associated to the filtration $\mathcal F$ and $A$.
  \item Proving that $\widetilde{F}_{gr\vert A,\mathcal F }$ is an isomorphism on the homology.
  \item Using a principle of homological algebra to conclude that $F$ is a filtered quasi-isomorphism.
\end{enumerate}

\paragraph{\textbf{Step 1.}} First, remark that the graded object associated to the Alexander filtration is just given by considering rectangles empty of $X\in \Xs$.
Let $\Caa$ be equal to $C_{\vert U_i=0 \forall i}$ (in particular, the corresponding differential counts only rectangles which do no contain any $\Os$).
Let $\Caa_{gr \vert A}$ be the associated graded object and $\Caa_{gr\vert A=h}$ the summand generated by generators $\wx\in \wss_n$ where $A(\wx)=h\in \zzz^l$.

Let $\mathcal Q$ be the $n\times n$ square corresponding to $G\subset H$ (see figure \ref{fig:stabilisation-1}): in particular, the row and column containing $O_1$ is not in $\mathcal Q$.
For any $\wx,\wy \in \wss_{n+1}$ and any domain $\wp\in \pi(\wx,\wy)$ such that $O_i(\wp)=X_i(\wp)=0$ for all $i$ then
$$
\mathcal{ F} (\wx) - \mathcal{ F} (\wy) = \# (\mathcal{ Q} \cap p )
$$
is well defined (see p. 23 \cite{MOST}) and determines a filtration on $\Caa_{gr\vert A=h}$.
Let $C_{gr \vert A,\mathcal{F}}$ be the graded object associated.

\paragraph{\textbf{Step 2.}} Let $\wss_{n+1} = \mathbf{I} \cup \mathbf{NI} \cup \mathbf{NN}$, where $\mathbf{NI}$ is the subset of $\wss_{n+1}$ whose elements $\wx\in \mathbf{NI}$ are such that $\x (n+1)= n$ and $\mathbf{NN}$ is the complement in $\wss_{n+1}$ of $\mathbf{I} \cup \mathbf{NI}$.

\begin{lemma}[lemma 3.7 \cite{MOST}]
  $H_* (\Caa_{gr \vert A,\mathcal{F}})$ is isomorphic to the free $\zzz$-module generated by elements of $\mathbf{I}$ and $\mathbf{NI}$.
\end{lemma}

\paragraph{\textbf{Step 3.}} Let $\widetilde{F}_{gr\vert A,\mathcal F }$ be the map induced by $F$:
$$
\widetilde{F}_{gr\vert A,\mathcal F } : \Caa_{gr \vert A,\mathcal{F}} \rightarrow \Caa'_{gr \vert A,\mathcal{F}}
$$
where $\Caa'_{gr \vert A,\mathcal{F}}$ splits as the direct sum $\widetilde{\mathcal{L}}_{gr \vert A,\mathcal{F}} \oplus \widetilde{\mathcal{R}}_{gr \vert A,\mathcal{F}}$ (the map $\times( U_{O_1}-U_{O_2})$ is equal to $0$ in $\Caa$), both of which are freely generated by elements in $\mathbf{I}$.
Then $\widetilde{F}_{gr\vert A,\mathcal Q }$ is a quasi-isomorphism (Proposition 3.8 \cite{MOST}).

\paragraph{\textbf{Step 4.}} Using the fact that a filtered chain map which induces an isomorphism on the homology of the associated graded objects is a filtered quasi-isomorphism gives that $\widetilde{F}$ is a quasi-isomorphism between $\Caa$ and $\Caa'$.
Using this fact one more time gives that $F$ is a quasi-isomorphism between $C$ and $C'$: $\Caa$ is just the graded object associated to $C$ with the filtration counting the number of variables $U_i$'s. 
\end{proof}

Thanks to proposition \ref{proposition:F-quasi-iso}, invariance under stabilization is proved.
\end{proof}
\section{Sign assignment induced by the complex}\label{section:sign-assignment}
In this section we prove that the chain complex $\widetilde{C}^- (G)$ coincides with the chain complex $C^- (G)$ over $\zzz$ after a choice of a sign assignment.

\begin{definition}\label{def:assignation-signe}
  A sign assigment is a function $\S:\Rect^{\circ}\rightarrow \{\pm1\}$ such that
\begin{enumerate}
  \item [(Sq)] for any distincts $r_1,r_2,r'_1,r'_2\in \Rect^{\circ}$ such that $r_1 * r_2 = r'_1 * r'_2$ we have
$$\S(r_1).\S(r_2) = - \S(r'_1).\S(r'_2)$$
\item [(V)] if $r_1,r_2\in\Rect^{\circ}$ are such that $r_1 * r_2$ is a vertical annulus then
$$\S(r_1).\S(r_2) = -1$$
\item [(H)] if $r_1,r_2\in\Rect^{\circ}$ are such that $r_1 * r_2$ is a horizontal annulus then
$$\S(r_1).\S(r_2) = +1$$
\end{enumerate}
\end{definition}

To define a sign assignment, we must define a map $s:\ss_n\rightarrow \wss_n$ such that $p\circ s = id_{\ss_n}$ 
$$\xymatrix{
1 \ar[r] & \zzz/2\zzz \ar@{^{(}->}[r]^{i} & \wss_n \ar[r]^p & \ss_n \ar[r] \ar@/^/@{->}[l]^s & 1
}$$

Define $s:\ss_n \rightarrow \wss_n$ by setting:
\begin{itemize}
  \item $s(\mathbf{1})=\wun$ and for all $0\leq i\leq n-2$, $s(\tau_i)=\wtau_i$.

  \item Let $\tau_{i,j}$ be the transposition which exchanges $i$ and $j$ with $j-i>1$.
Then it can be writing in a unique way like that:
$$
\tau_{i,j} = \tau_i . \tau_{i+1} . \,\ldots \,.\tau_{j-2} . \tau_{j-1} .\tau_{j-2} . \, \ldots \,.\tau_{i+1} . \tau_i
$$
We define
$$
s(\tau_{i,j}) = \wtau_i . \wtau_{i+1} . \,\ldots \,.\wtau_{j-2} . \wtau_{j-1} .\wtau_{j-2} . \, \ldots \,.\wtau_{i+1} . \wtau_i
$$

  \item Let $\x\in \ss_n$ and $i_{n-1} \in \{0,\ldots,n-1\}$ be such that $\x(i_{n-1})=n-1$.
Then $\x.\tau_{i_{n-1},n-1} (n-1) = n-1$.
We think of $\x.\tau_{i_{n-1},n-1}$ as an element of $\ss_{n-1}$.
We go on to obtain this kind of writing:
$$
\x = \tau_{i_0 , 0} . \tau_{i_1,1}. \, \ldots \, . \tau_{i_{n-1},n-1}
$$

\begin{rembis}
  In general $x(i_k)\neq k$, except for $k=n-1$.
\end{rembis}

This writing is unique and we let
$$s(\x) = \wtau_{i_0 ,0}.\ldots .\wtau_{i_{n-1},n-1}$$
With this section $s$, every element $\wx \in \wss_n$ has a unique writing called normal form which is
$$\wx = z^u s(\x)$$
where $u\in\{0,1\}$ and $\x = p(\wx)$.
\end{itemize}

To define the sign assignment induced by $\widetilde{C}^- (G)$ we need the 2-cocycle $c\in C^2 (\ss_n ,\zzz/2\zzz)$ associated to the map $s$ given by
\begin{equation}\label{eq:structure algebre}
  s(\x).s(\y) = (i\circ c(\x,\y))s(\x.\y)
\end{equation}
The cohomological class of $c$ measures how $s$ fails to be a group morphism.
In particular, it is non-trivial ($n\geq4$) since $\wss_n$ is a non-trivial central extension of $\ss_n$ by $\zzz/2\zzz$.

We say that a rectangle $r$ is horizontally torn if given the coordonates $(i_{bl},j_{bl})$ of its bottom left corner and $(i_{tr},j_{tr})$ of its top right corner then $i_{bl}>i_{tr}$.
Otherwise, $r$ is said to be not horizontally torn.

\begin{lemma}
The complex $(\widetilde{C}^- (G),\wdm) $ induces a sign assignment in the sense of definition \ref{def:assignation-signe}: for all $(\x,\y)\in \ss_n^2$ and all $r\in \Rect^{\circ}(\x,\y)$
\begin{equation}\label{eq:assignation-signe}
  \S(r) = \varepsilon(r) . c(\x^{-1}.\y,\x)
\end{equation}
where $\varepsilon(r)= +1$ if $r$ is a rectangle not horizontally torn and $\varepsilon(r)= -1$ otherwise.
\end{lemma}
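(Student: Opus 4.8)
The plan is to evaluate the differential $\wdm$ on the distinguished $\Lambda$-basis of $\widetilde{C}^-(G)$ furnished by the section $s$, to read off the signs that appear, and to check that they satisfy the three conditions of Definition~\ref{def:assignation-signe}. Recall that every generator of $\widetilde{C}^-(G)=\Lambda[\wss_n]/\langle z+1\rangle$ equals $\pm s(\x)$ for a unique $\x\in\ss_n$, via the normal form $z^u s(\x)$. Fix $\x,\y\in\ss_n$ and an empty rectangle $r\in\EmptyRect(\x,\y)$; its group-theoretic label is $\wr=\wtau_{i,j}\in\EmptyRect(\wx,\wy)$, where $i$ is the vertical circle through the bottom left corner of $r$ and $\wy=\wx.\wtau_{i,j}$, so that $\x^{-1}\y$ is the transposition exchanging $i$ and $j$. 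By the definition of $\wtau_{i,j}$ for $i<j$, the relation $\wtau_{j,i}=z\,\wtau_{i,j}$, and the definition of $s$ on transpositions, one has $\wtau_{i,j}=z^{\delta}\,s(\x^{-1}\y)$ where $\delta=1$ exactly when $i>j$, that is, exactly when $r$ is horizontally torn; passing to $\widetilde{C}^-(G)$, where $z=-1$, this factor contributes precisely $\varepsilon(r)$.

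The next step is the cocycle computation: $s(\x).\wtau_{i,j}=z^{\delta}\,s(\x).s(\x^{-1}\y)$, and applying the defining relation \eqref{eq:structure algebre} of the $2$-cocycle $c$ to the product $s(\x).s(\x^{-1}\y)$, which lies over $\x\cdot(\x^{-1}\y)=\y$, rewrites it as $z^{\,c(\x^{-1}\y,\,\x)}\,s(\y)$. Combining the two computations, the coefficient of $s(\y)$ in $\wdm s(\x)$, once $z$ is set to $-1$, equals $\varepsilon(r)\,c(\x^{-1}\y,\x)$; this is \eqref{eq:assignation-signe}, and it exhibits $\wdm$, in the basis $\{s(\x)\}_{\x\in\ss_n}$, as the differential $\partial^-$ of \cite{MOST} attached to this function $\S$.

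It then remains to see that $\S$ is a sign assignment, and this follows from $\wdm\circ\wdm=0$ by comparison of coefficients, using exactly the relations that entered that proof. If $r_1*r_2=r_1'*r_2'$ with $r_1,r_2,r_1',r_2'$ distinct empty rectangles, then \eqref{eq:anticommutation} and \eqref{eq:conjugation} show that the two products of labels differ by $z$, so the two corresponding terms of $\wdm\circ\wdm$ cancel and $\S(r_1)\S(r_2)=-\S(r_1')\S(r_2')$, which is (Sq). If $r_1*r_2$ is a width-$1$ vertical annulus the labels multiply to $\wtau_i.\wtau_i=z$, giving $\S(r_1)\S(r_2)=-1$, which is (V); if it is a width-$1$ horizontal annulus they multiply to $\wtau_{k,l}.\wtau_{l,k}=\wun$, giving $\S(r_1)\S(r_2)=+1$, which is (H).

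I expect the delicate point to be the bookkeeping of the central element $z$ in the second step: one must be sure that the contribution of a horizontally torn rectangle is exactly the factor $\varepsilon(r)$ of the statement, and not partly absorbed into the cocycle term, and that the cocycle contributes exactly $c(\x^{-1}\y,\x)$; this forces a consistent use of the conventions for labelling rectangles by $\wtau_{i,j}$ and for the normal form $z^u s(\x)$. Once \eqref{eq:assignation-signe} is in place, the verification of (Sq), (V) and (H) is a direct transcription of Cases 1--5 of the proof of $\wdm\circ\wdm=0$.
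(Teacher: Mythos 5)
Your route is genuinely different from the paper's. The paper simply takes the formula $\S(r)=\varepsilon(r)\,c(\x^{-1}\y,\x)$ as given and directly verifies (Sq), (V), (H) by writing out the cocycle identity $\delta c(\cdot,\cdot,\cdot)=1$ for suitable triples and plugging in $c(\un,\x)=c(\x,\un)=1$ and $c(\tau,\tau)=-1$. You instead derive the formula by changing basis from $\{\wx\}$ to $\{s(\x)\}_{\x\in\ss_n}$ and reading off the coefficient of $s(\y)$ in $\wdm s(\x)$, and then transport the already-proved identity $\wdm\circ\wdm=0$ (together with the direct observations $\wtau_i^2=z$ and $\wtau_{k,l}\wtau_{l,k}=\wun$) to get (Sq), (V), (H). This buys you an explanation of where the sign assignment comes from and avoids redoing the case analysis with cocycles; the paper's approach is more self-contained but purely computational. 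Your second and third steps are sound: the cancellation $\wr_1\wr_2=z\,\wr_1'\wr_2'$ from Cases 1--4 of the $\wdm^2=0$ proof, combined with $s(\x)\wr_1\wr_2=\S(r_1)\S(r_2)s(\z)$, does give (Sq), and the annulus computations give (V) and (H) directly rather than just their cancellation.

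The one concrete gap is exactly the point you flagged as delicate and did not resolve. From the defining relation \eqref{eq:structure algebre}, $s(\x).s(\y)=(i\circ c(\x,\y))\,s(\x.\y)$, applied with second argument $\x^{-1}\y$, one gets
$$
s(\x)\,s(\x^{-1}\y) \;=\; z^{\,c(\x,\,\x^{-1}\y)}\,s(\y),
$$
so the cocycle factor produced by your basis-change argument is $c(\x,\x^{-1}\y)$, \emph{not} $c(\x^{-1}\y,\x)$ as you wrote; you have silently swapped the two arguments. As stated, your step 1 therefore does not prove the formula \eqref{eq:assignation-signe} with the argument order displayed in the lemma. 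You need to either (i) show that $c(\x,\x^{-1}\y)$ and $c(\x^{-1}\y,\x)$ agree for $\x^{-1}\y$ a transposition, or (ii) conclude that the lemma's displayed formula should read $c(\x,\x^{-1}\y)$ under the Section~3 convention $\wy=\wx\,\wtau_{i,j}$, and note the discrepancy with the paper. Without pinning this down, the identification of your $\S$ with the function in the lemma is not established, even though the argument that whatever sign function arises satisfies (Sq), (V), (H) is correct.
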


\begin{rembis}
The sign assignment in the sense of definition \ref{def:assignation-signe} is unique up to a 1-coboundary: if $\S_1$ and $\S_2$ are two sign assignments then there exists an application $f:\ss_n \rightarrow \{\pm1\}$ such that for all rectangles $r\in \Rect^{\circ}(\x,\y)$, $\S_1(r) = f(\x).f(\y) .\S_2(r)$.
It is a consequence of the fact that the central extension corresponds to a 2-cohomological class in $H^2 (\ss_n,\zzz/2\zzz)$ (compare with theorem 4.2 \cite{MOST}).
Here, we construct explicitely a map $s:\ss_n\rightarrow \wss_n$ such that $p\circ s = id$ which means making a choice of a representative of this class, another choice must differ by a 1-coboundary.
\end{rembis}

\begin{proof}
Since $c$ is 2-cocycle we have $\delta c =1$ i.e. for all $(\x,\y,\z)\in \ss_n^3$
$$
\delta c (\x,\y,\z) =  c(\y,\z). c(\x.\y , \z) . c(\x ,\y.\z) . c(\x,\y) = 1
$$
By definition we have $ c(\x,\un ) =c(\un,\x) = 1 $ and $ c(\tau_{i,j} , \tau_{i,j} ) =-1 $.
Let's prove that $\S$ satisfy properties (Sq), (V) et (H).
\begin{enumerate}
  \item [(Sq)] Let any four distincts rectangles S $r_1,r_2,r'_1,r'_2\in \Rect^{\circ}$ such that $r_1 * r_2 = r'_1 * r'_2$.
Suppose
$$\wr_1 \in \Rect^{\circ}(\wx,\wx.\wtau_{i,j})$$
corresponds to $r_1$ and
$$\wr_2\in \Rect^{\circ}(\wx.\wtau_{i,j},\wx.\wtau_{i,j}.\wtau_{k,l})$$
corresponds to $r_2$.
Then
$$\wr_1 ' \in \Rect^{\circ}( \wx,\wx.\wtau_{k,l} )$$
corresponds to $r_1 '$ and
$$\wr_2 ' \in \Rect^{\circ}( \wx.\wtau_{k,l},\wx.\wtau_{k,l}.\wtau_{i,j} )$$
corresponds to $r_2 '$.
There are several cases to verify, as for the proof of $\wdm \circ \wdm = 0$ but all cases can be verified in a similar way.
We verify the case $i<j<k<l$.
We calculate $\delta c(\tau_{k,l},\tau_{i,j},\x)$ and $\delta c (\tau_{i,j},\tau_{k,l},\x)$.
With equalities $c (\tau_{i,j}.\tau_{k,l},\x) = c (\tau_{k,l} .\tau_{i,j},\x)$ and $c (\tau_{i,j},\tau_{k,l}) = - c (\tau_{k,l} ,\tau_{i,j})$ we get
$$\S(r_1). \S(r_2 )  = -\S( r'_1 ).\S(  r'_2 )$$

\item [(V)] Let  $r_1,r_2\in\Rect^{\circ}$ such that $r_1 * r_2$ is a vertical annulus.
Suppose that $\wr_1 \in \Rect^{\circ}(\wx,\wx.\wtau_{i})$ corresponds to $r_1$ and $\wr_2\in \Rect^{\circ}(\wx.\wtau_{i},\wx.\wtau_{i}.\wtau_{i,j})$ corresponds to $r_2$.
We calculate $\delta c(\tau_{i},\tau_{i},\x)$ and with equalities $c(\x,\un) =1$, $c(\tau_{i},\tau_{i})=-1$ we get
$$\S( r_1 ).\S (r_2 ) =-1$$

\item [(H)] Let  $r_1,r_2\in\Rect^{\circ}$ such that $r_1 * r_2$ is a horizontal annulus (of height one).
Suppose $\wr_1 \in \Rect^{\circ}(\wx,\wx.\wtau_{i,j})$ corresponds to $r_1$ and $\wr_2\in \Rect^{\circ}(\wx.\wtau_{i,j},\wx.\wtau_{i,j}.\wtau_{j,i})$ corresponds to $r_2$.
We calculate $\delta c(\tau_{i,j},\tau_{i,j},\x)$ and with equalities $c(\x,\un) =1$, $c(\tau_{i,j},\tau_{i,j})=-1$ we get
$$\S( r_1 ).\S (r_2 ) =+1$$
\end{enumerate}
\end{proof}

\begin{proposition}
  The filtered chain complex $(\widetilde{C}^- (G),\wdm) $ is filtered isomorphic to the filtered chain complex $(C^- (G),\partial^- )$.
\end{proposition}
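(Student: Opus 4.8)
The plan is to exhibit an explicit filtered isomorphism $\Psi\colon(\widetilde{C}^-(G),\wdm)\to(C^-(G),\partial^-)$, where $\partial^-$ is the differential of Section~\ref{section:sign-assignment} built from the sign assignment $\S$ produced by the preceding lemma. By the normal form of Section~\ref{section:algebraic-preliminaries}, every $\wx\in\wss_n$ is uniquely $z^u s(\x)$ with $u\in\{0,1\}$ and $\x=p(\wx)$, so after quotienting by $z+1$ the family $\{s(\x)\}_{\x\in\ss_n}$ is a $\Lambda$-basis of $\widetilde{C}^-(G)$. I would set $\Psi(z^u s(\x))=(-1)^u\,\x$ and extend $\Lambda$-linearly. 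Since $M(\wx)=M(\x)$ and $A(\wx)=A(\x)$ by definition, $\Psi$ is a $\Lambda$-module isomorphism that preserves the Maslov grading and the Alexander filtration verbatim; the only thing left to check is that it intertwines the differentials.

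For that, I would compute $\wdm s(\x)$ term by term. An empty rectangle $r\in\EmptyRect(\x,\y)$ contributes $U_{O_1}^{O_1(r)}\cdots U_{O_n}^{O_n(r)}\cdot\bigl(s(\x)\cdot\wr\bigr)$, where $\wr$ is the group element attached to $r$; by the convention that reads $\wtau_{i,j}$ as the rectangle with bottom left corner on the $i$-th circle, together with the relation $\wtau_{j,i}=z\,\wtau_{i,j}$, one has $\wr=z^{\,\eta(r)}\,s(\tau)$ with $\tau$ the transposition of the two circles crossed by $r$ and $(-1)^{\eta(r)}=\varepsilon(r)$, where $\varepsilon(r)$ is as in the preceding lemma ($+1$ iff $r$ is not horizontally torn). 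Rewriting $s(\x)\cdot\wr$ in normal form by means of the defining $2$-cocycle identity \eqref{eq:structure algebre} then introduces exactly the central factor recorded by $c$, and applying $\Psi$ converts the total central contribution of the term into the scalar $\S(r)$ of \eqref{eq:assignation-signe}. Summing over all empty rectangles gives $\Psi(\wdm s(\x))=\partial^-\x=\partial^-(\Psi s(\x))$, so $\Psi$ is a filtered chain isomorphism.

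I do not expect a genuine obstacle here: the only non-formal ingredient — the bookkeeping of the central element $z$, occurring both in the rectangle/element dictionary ($\wtau_{i,j}$ versus $\wtau_{j,i}$, i.e. torn versus untorn) and in the normal-form rewriting \eqref{eq:structure algebre} — was already carried out in the preceding lemma, so the present statement is essentially a repackaging of it. The last point worth adding is that the result is independent of the chosen sign assignment: given any other sign assignment $\S'$, the Remark following the preceding lemma provides a $1$-cochain $f\colon\ss_n\to\{\pm1\}$ with $\S'=\delta f\cdot\S$, and post-composing $\Psi$ with the filtered isomorphism $\x\mapsto f(\x)\,\x$ of $C^-(G)$ yields a filtered isomorphism onto $(C^-(G),\partial^-)$ for that $\S'$ as well.
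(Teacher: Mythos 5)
Your proof takes essentially the same route as the paper's: the paper extends the section $s\colon\ss_n\to\wss_n$ $\Lambda$-linearly to a module isomorphism $s\colon C^-(G)\to\widetilde{C}^-(G)$ and checks $s\circ\partial^-=\wdm\circ s$, while you write down its inverse $\Psi$ and check the same intertwining relation from the other side; the grading/filtration bookkeeping and the reduction to equation~\eqref{eq:assignation-signe} are identical. Your closing remark that the choice of sign assignment is immaterial (twist $\Psi$ by the $1$-cochain $f$) is a small, correct addition that the paper only implicitly covers via its remark on uniqueness of sign assignments.

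One point you should nail down rather than wave at: when you rewrite $s(\x)\cdot\wr=z^{\eta(r)}s(\x)s(\tau)$ using \eqref{eq:structure algebre}, you get the factor $c(\x,\tau)=c(\x,\x^{-1}\y)$ with $\x$ in the \emph{first} slot, since the rectangle acts on the right ($\wy=\wx\cdot\wtau_{i,j}$). Equation~\eqref{eq:assignation-signe} as printed reads $c(\x^{-1}\y,\x)$ with the arguments swapped, and the $2$-cocycle $c$ is genuinely non-symmetric (e.g.\ $c(\tau_0,\tau_2)=1$ while $c(\tau_2,\tau_0)=-1$), so these are not the same scalar in general. The printed formula appears to be a typo — the paper's own claim $s\circ\partial^-=\wdm\circ s$ forces $\S(r)=\varepsilon(r)\,(-1)^{c(\x,\x^{-1}\y)}$ just as your computation does — but you should not assert that your sign ``is the scalar $\S(r)$ of~\eqref{eq:assignation-signe}'' without either deriving the corrected formula or invoking the coboundary remark to absorb the difference.
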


\begin{proof}
The map $s:\ss_n\rightarrow \wss_n$ extends linearly with respect to $\zzz[U_1,\ldots,U_n]$ uniquely to a map $s:C^- (G) \rightarrow \widetilde{C}^- (G)$ which is an isomorphism of modules.
It commutes with the differentials i.e. $s \circ \partial^- = \wdm \circ s$ where the sign assignment $\S$ is given by equation \eqref{eq:assignation-signe}.
By definition, $s$ respects the Alexander filtration and the Maslov grading.
So $s$ defines a filtered isomorphism between the complexes $(C^- (G),\partial^- )$ and $(\widetilde{C}^- (G),\wdm) $.
\end{proof}

\end{document}